\DeclareMathOperator{\conv}{conv}
\DeclareMathOperator{\rc}{rc}
\DeclareMathOperator{\obs}{Obs}
\DeclareMathOperator{\aff}{aff}
\DeclareMathOperator{\xc}{xc}
\newcommand{\R}{\mathds{R}}
\newcommand{\Z}{\mathds{Z}}
\newcommand{\Q}{\mathds{Q}}
\newcommand{\cO}{\mathcal{O}}
\newcommand{\bin}[1]{\{0,1\}^{#1}}
\newcommand{\I}{\mathcal{I}}
\newcommand{\vcompact}{v^\star_{\mathrm{com}}}
\newcommand{\vcut}{v^\star_{\mathrm{cut}}}
\newcommand{\vCG}{v^\star_{\text{CG}}}
\newcommand{\basic}{\textsf{basic} }
\newcommand{\dclosed}{\textsf{downcld} }
\newcommand{\sboxes}{\textsf{sboxes} }
\DeclarePairedDelimiter{\card}{\lvert}{\rvert}
\newcommand{\st}{:}		
\newcommand{\T}{^\intercal}
\newcommand{\sprod}[2]{{#1}\T{#2}}
\newcommand{\define}{\coloneqq}
\theoremstyle{plain}
\newtheorem{theorem}{Theorem}[section]
\newtheorem{lemma}[theorem]{Lemma}
\newtheorem{proposition}[theorem]{Proposition}
\newtheorem*{claim*}{Claim}
\newtheorem{observation}[theorem]{Observation}
\theoremstyle{definition}
\newtheorem{remark}[theorem]{Remark}
\begin{document}

\title{Efficient MIP Techniques for Computing the Relaxation Complexity}

\author[1]{Gennadiy Averkov}
\author[2]{Christopher Hojny}
\author[1]{Matthias Schymura}
\affil[1]{%
  BTU Cottbus-Senftenberg\\
  Platz der Deutschen Einheit 1\\
  03046 Cottbus, Germany\\
  \emph{email} \{averkov, schymura\}@b-tu.de
}
\affil[2]{%
  Eindhoven University of Technology\\
  Combinatorial Optimization Group\\
  PO Box~513\\
  5600 MB Eindhoven, The Netherlands\\
  \emph{email} c.hojny@tue.nl
}

\date{}

\maketitle

\begin{abstract}

The relaxation complexity $\rc(X)$ of the set of integer points $X$ contained in a polyhedron is the minimal number of inequalities needed to formulate a linear optimization problem over $X$ without using auxiliary variables.
Besides its relevance in integer programming, this concept has interpretations in aspects of social choice, symmetric cryptanalysis, and machine learning.

We employ efficient mixed-integer programming techniques to compute a robust and numerically more practical variant of the relaxation complexity.
Our proposed models require row or column generation techniques and can be enhanced by symmetry handling and suitable propagation algorithms.
Theoretically, we compare the quality of our models in terms of their LP relaxation values.
The performance of those models is investigated on a broad test set and is underlined by their ability to solve challenging instances that could not be solved previously.

\textbf{Keywords:} mixed-integer programming models, relaxation complexity,
branch-and-cut, branch-and-price
\end{abstract}

\section{Introduction}

Let~$X \subseteq \Z^d$ be such that~$X = \conv(X) \cap \Z^d$ and let~$Y
\subseteq \Z^d$.
A fundamental problem in various fields is to find a polyhedron~$P$ with the
minimum number of facets such that~$X \subseteq P$ and~$(Y \setminus X)
\cap P = \emptyset$.
We call this quantity the \emph{relaxation complexity of~$X$ w.r.t.~$Y$},
in formulae, $\rc(X,Y)$, and any such polyhedron a \emph{relaxation}.
In case~$Y = \Z^d$, we write~$\rc(X)$ instead of~$\rc(X, \Z^d)$.
In the theory of social choice, $X \subseteq \bin{d}$ can be interpreted as
the winning strategies of a simple game, see~\cite[Chap.~8.3]{TaylorPacelli2008}.
One is then interested in computing~$\rc(X, \bin{d})$, i.e., the smallest
number of inequalities needed to distinguish winning and loosing strategies.
In symmetric cryptanalysis, a subfield of cryptography, $\rc(X, \bin{d})$
corresponds to the minimum number of substitutions in symmetric key
algorithms~\cite{10.1007/978-3-662-45611-8_9}.
In machine learning, relaxations~$P$ correspond to polyhedral classifiers
that distinguish two types of data points~\cite{AstorinoGaudioso2002}.
The relaxation complexity is then the minimum size of a polyhedral
classifier.
Finally, of course, $\rc(X)$ is the minimum number of inequalities needed to formulate a linear optimization problem over~$X \subseteq \Z^d$ without using auxiliary variables.

Depending on the application, different strategies have been
pursued for computing and bounding the relaxation complexity.
For example, Kaibel \& Weltge~\cite{kaibelweltge2015lowerbounds} introduced
the notion of hiding sets for deriving lower bounds on~$\rc(X)$.
Using this technique, they could show that several sets~$X$
arising from combinatorial optimization problems have superpolynomial
relaxation complexity.
Moreover, $\rc(X,Y)$ can be found by computing the chromatic number of a
suitably defined hypergraph; deriving lower bounds on the chromatic number
allowed Kurz \& Napel~\cite{KurzNapel2016} to find a lower bound on~$\rc(X,
\bin{d})$ in the context of social choice.
In machine learning, algorithms have been devised to construct polyhedral
classifiers and thus providing upper bounds on~$\rc(X,Y)$, see~\cite{AstorinoGaudioso2002,DundarEtAl2008,pmlr-v13-manwani10a,OrsenigoVercellis2007}.
To find the exact value of~$\rc(X, \bin{d})$ in the context of symmetric
cryptanalysis, mixed-integer programming models have been investigated.
For higher dimensions, however, many of these models cannot compute~$\rc(X,
\bin{d})$ efficiently in practice.

In this article, we follow the latter line of research.
Given the relevance of knowing the exact value of~$\rc(X, Y)$, our aim is
to develop efficient mixed-integer programming (MIP) techniques
for computing~$\rc(X, Y)$, if both~$X$ and~$Y$ are finite.
More precisely, we investigate methods to compute~$\rc_\varepsilon(X,Y)$, a
more robust variant of~$\rc(X,Y)$ that is numerically more practical as we
discuss below.
To this end, we propose in Section~\ref{sec:basicModels} three different MIP models that
allow to compute~$\rc_\varepsilon(X,Y)$: a compact model as well as two more
sophisticated models that require row or column generation techniques.
Section~\ref{sec:modelComparison} compares the quality of the three models in terms of their
LP relaxation value, and we discuss several enhancements of the basic models
in Section~\ref{sec:modelEnhancements}.
These enhancements include tailored symmetry handling and propagation
techniques as well as cutting planes.
Finally, we compare the performance of the three different models on a
broad test set comprised of instances with different geometric properties
and instances arising in symmetric cryptanalysis (Section~\ref{sec:experiments}).
Our novel methods allow to solve many challenging instances
efficiently, which was not possible using the basic models.

We remark that the basic versions of two models have already been used by
us in~\cite{AverkovEtAl2021} to find~$\rc_\varepsilon(X,Y)$ for~$X$ being the integer
points in low-dimensional cubes and crosspolytopes.
These experiments helped us to prove general formulae for~$\rc(X)$ in these cases.
For this reason, we believe that the more sophisticated algorithms
described in this article are not only of relevance for practical
applications, but also to develop hypotheses for theoretical results.
Our code is publicly available at github\footnote{\url{https://github.com/christopherhojny/relaxation_complexity}}.

\paragraph{Related Literature}
One of the earliest references on the relaxation complexity goes back to
Jeroslow~\cite{jeroslow1975ondefining} who showed the tight bound~$\rc(X,\bin{d}) \leq 2^{d-1}$, for any $X \subseteq \bin{d}$.
This result has been complemented by Weltge~\cite{weltge2015diss} who
showed that most~$X \subseteq \bin{d}$ have~$\rc(X, \bin{d}) \geq
\frac{2^d}{c \cdot d^3}$, for some absolute constant~$c>0$.
Moreover, hiding sets proposed by Kaibel \&
Weltge~\cite{kaibelweltge2015lowerbounds} provide a lower bound on~$\rc(X)$.
The bound given by hiding sets can be improved by computing the chromatic
number of a graph derived from hiding sets, see~\cite{AverkovEtAl2021}.
Regarding the computability of~$\rc(X)$, it has been shown
in~\cite{averkovschymura2020complexity} that there exists a proper subset~$\obs(X)$
of~$\Z^d \setminus X$ such that~$\rc(X) = \rc(X, \obs(X))$.
If~$\obs(X)$ is finite, they show that~$\rc(X, \obs(X))$, and
thus~$\rc(X)$, can be computed by solving a mixed-integer program.
They also provide sufficient conditions on~$X$ that guarantee~$\obs(X)$ to
be finite.
Moreover, they establish that~$\rc(X)$ is computable if~$d \leq 3$; for~$d =
2$, a polynomial time algorithm to compute~$\rc(X)$ is discussed
in~\cite{AverkovEtAl2021}.
In general, however, it is an open question whether~$\rc(X)$ is
computable.

One drawback of relaxations of~$X$ as defined above is that they might be sensitive to numerical errors.
If~$\sprod{a}{x} \leq \beta$ is a facet defining inequality of a relaxation of~$X$
that separates~$y \in \Z^d \setminus X$, then we only know~$\sprod{a}{y} >
\beta$.
Thus, slightly perturbing~$a$ might not separate~$y$ anymore.
To take care of this, we suggested in~\cite{AverkovEtAl2021} to add a
safety margin~$\varepsilon > 0$ to the separation condition.
That is, if~$\sprod{a}{x} \leq \beta$ is a facet defining inequality of a
relaxation of~$X$ with~$\|a\|_\infty = 1$ that separates~$y$, then we
require~$\sprod{a}{y} \geq \beta + \varepsilon$.
In this case, we say that~$y$ is~\emph{$\varepsilon$-separated from~$X$}.
Then, $\rc_\varepsilon(X)$ denotes the smallest number of facets of any
relaxation of~$X$ that satisfies the safety margin condition\footnote{Note
  that the definition in~\cite{AverkovEtAl2021} is different, but both
  concepts coincide if the value of~$\varepsilon$ is defined appropriately.
  We follow the definition provided here, because it simplifies the
  discussion in this article.
}.
We call such a relaxation an~$\varepsilon$-relaxation of~$X$.
Analogously to~$\rc(X,Y)$, we define~$\rc_\varepsilon(X,Y)$ to be the smallest number of inequalities needed to $\varepsilon$-separate~$X$ and~$Y \setminus X$.
As~$\varepsilon$-relaxations are more restrictive than relaxations,
$\rc_\varepsilon(X) \geq \rc(X)$ for each~$\varepsilon > 0$.
In contrast to~$\rc(X)$, however, we show in~\cite{AverkovEtAl2021} that for every finite and full-dimensional $X \subseteq \Z^d$
there is a finite set~$Y \subseteq \Z^d \setminus X$ such
that~$\rc_\varepsilon(X) = \rc_\varepsilon(X,Y)$.
Thus, $\rc_\varepsilon(X)$ is computable and the aim of this article is to
develop MIP techniques that allow to find~$\rc_\varepsilon(X,Y)$ efficiently.
In particular, if~$\varepsilon$ approaches~0, then~$\rc_\varepsilon(X)$
converges towards~$\rc_\Q(X)$, a variant of the relaxation complexity which requires the relaxations to be rational.
Further variations of~$\rc(X)$ in which the size of coefficients in facet defining inequalities are bounded are discussed in~\cite{Hojny2020,hojny2018strong}.

Besides finding relaxations of~$X$, another field of research aims to find
outer descriptions of~$P = \conv(X)$ to be able to use linear programming
techniques to solve optimization problems over~$X$.
Since~$P$ might have exponentially many facets, the concept of
extended formulations has been introduced.
Extended formulations are polyhedra~$Q \subseteq \R^{d + k}$ whose
projection onto~$\R^d$ yields~$P$.
The smallest number of facets of an extended formulation of~$P$ is its
extension complexity~$\xc(P)$.
We refer the reader to the surveys of Conforti et
al.~\cite{ConfortiCornuejolsZambelli2013} and
Kaibel~\cite{kaibel2011extended} as well as the references therein.
Extended formulations that allow to use integer variables have been
discussed, e.g., by Bader et al.~\cite{DBLP:journals/mp/BaderHWZ18},
Cevallos et al.~\cite{DBLP:conf/soda/CevallosWZ18}, and
Weltge~\cite[Chap.~7.1]{weltge2015diss}.
A combination of $\rc(X,\{0,1\}^d)$ and $\xc(\conv(X))$ has been studied by
Hrube\v{s} \& Talebanfard~\cite{hrubestalebanfard2021onthe}.

\paragraph{Basic Definitions and Notation}
Throughout this article, we assume that~$d$ is a positive integer.
The set~$\{1,\dots,d\}$ is denoted by~$[d]$, and we write~$e_1,\dots,e_d$
to denote the~$d$ canonical unit vectors in~$\R^d$.
Moreover, $\Delta_d = \{0, e_1, \dots, e_d\} \subseteq \R^d$ is
the vertex set of the standard simplex in~$\R^d$, and~$\lozenge_d = \{0,
\pm e_1, \dots, \pm e_d\} \subseteq \R^d$ denotes the integer points in
the~$d$-dimensional standard crosspolytope.
The affine hull of a set~$X \subseteq \R^d$ is denoted by~$\aff(X)$.

A set~$X \subseteq \Z^d$ is called \emph{lattice-convex} if~$X = \conv(X)
\cap \Z^d$.
For a lattice-convex set~$X \subseteq \Z^d$, we say that~$H \subseteq
(\aff(X) \cap \Z^d) \setminus X$ is a \emph{hiding set} if, for any distinct~$y_1,
y_2 \in H$, we have~$\conv(\{y_1, y_2\}) \cap \conv(X) \neq \emptyset$.
Kaibel \& Weltge~\cite{kaibelweltge2015lowerbounds} proved that the
cardinality of any hiding set is a lower bound on~$\rc(X)$.
The maximum size of a hiding set is denoted by~$H(X)$.
Moreover, if~$Y \subseteq \Z^d$, we say that~$H$ is a \emph{$Y$-hiding set} if~$H$ is a hiding set that is contained in~$Y$.
Analogously to~$H(X)$, $H(X,Y)$ denotes the maximum size of a~$Y$-hiding set.

\section{Mixed-Integer Programming Models to Compute $\rc_\varepsilon(X,Y)$}
\label{sec:basicModels}

In this section, we discuss three different mixed-integer programming
models to compute~$\rc_\varepsilon(X,Y)$.
The three different MIP formulations that we discuss differ in the way how
they model~$\rc_\varepsilon(X,Y)$.
The first model uses only polynomially many variables and
inequalities, the second model needs exponentially many inequalities while
the number of variables is still polynomial, and the third model requires
exponentially many variables but only polynomially many inequalities.
For this reason, we refer to these three models as the compact, the cutting
plane, and the column generation model, respectively.
In preliminary experiments with our code, we have already used the compact
and column generation model~\cite{AverkovEtAl2021}.
Nevertheless, we provide the full details of these models to make the article
self-contained and to be able to explain the model enhancements.
For the sake of convenience, we assume for the remainder of this article
that~$X$ and~$Y$ are disjoint.
This is without loss of generality, because we can replace~$Y$ by~$Y
\setminus X$, which does not change the value of~$\rc_\varepsilon(X,Y)$.
We also refer to~$X$ as the set of \emph{feasible} points, whereas the
points in~$Y$ are called \emph{infeasible}.

\subsection{Compact Model}

Observe that lattice-convex sets are exactly those subsets of~$\Z^d$ that admit a relaxation.
In~\cite{averkovschymura2020complexity}, a mixed-integer programming
formulation has been proposed to check whether a finite lattice-convex
set~$X$ admits a relaxation with~$k$ inequalities, and we have explained
in~\cite{AverkovEtAl2021} how to adapt the model to be able to
compute~$\rc_\varepsilon(X,Y)$.

Given an upper bound~$k$ on the number of inequalities needed to separate~$X$
and~$Y$, the model's idea is to introduce variables~$a_{ij}$
and~$b_i$, $(i,j) \in [k] \times [d]$, to model the~$k$ potential
inequalities needed in a relaxation.
Moreover, for each~$y \in Y$ and~$i \in [k]$, a binary variable~$s_{yi}$ is
introduced that indicates whether the~$i$-th inequality is violated by~$y$;
additional binary variables~$u_i$, $i \in [k]$, indicate whether the~$i$-th inequality
is needed in a relaxation.
Using a big-M term with~$M \geq d(\rho_X + \rho_Y) + \varepsilon$, with~$\rho_X =
\max \{ \| x\|_\infty \st x \in X\}$ and~$\rho_Y = \max \{ \| y\|_\infty
\st y \in Y\}$, the mixed-integer programming formulation for~$\rc_\varepsilon(X,Y)$ is as follows:
\begin{subequations}
  \label{eq:compactModel}
  \begin{align}
    \min \sum_{i = 1}^k u_i &&&\\
    \sum_{j = 1}^d a_{ij} x_j &\leq b_i, && x \in X,\; i \in [k],
    \label{eq:compactModelFeas}\\
    \sum_{i = 1}^k s_{yi} &\geq 1, && y \in Y,
    \label{eq:compactModelCovering}\\
    \sum_{j = 1}^d a_{ij} y_j &\geq b_i + \varepsilon - M(1 - s_{yi}),
                              && y \in Y,\; i \in [k],
    \label{eq:compactModelInf}\\
    s_{yi} &\leq u_i, && y \in Y,\; i \in [k],
    \label{eq:compactModelLink}\\
    -1 \leq a_{ij} &\leq 1, && (i,j) \in [k] \times [d],
    \label{eq:compactModelScaleA}\\
    -d \rho_X \leq b_i &\leq d \rho_X, && i \in [k],
    \label{eq:compactModelScaleB}\\
    s_{yi},\; u_i &\in \bin{}, && y \in Y,\; i \in [k].
    \label{eq:compactModelBin}
  \end{align}
\end{subequations}
Inequalities~\eqref{eq:compactModelFeas} ensure that the~$k$ inequalities
are valid for~$X$ and Inequalities~\eqref{eq:compactModelCovering}
guarantee that each~$y \in Y$ is cut off by at least one inequality.
If an inequality is selected to separate~$y \in Y$ and~$X$,
Inequalities~\eqref{eq:compactModelInf} ensure that this is consistent with
the~$k$ inequalities defined by the model.
Finally, Inequalities~\eqref{eq:compactModelLink} ensure that~$u_i$ is~1 if
inequality~$i \in [k]$ separates an infeasible point, whereas
Inequalities~\eqref{eq:compactModelScaleA}
and~\eqref{eq:compactModelScaleB} scale the~$k$ inequalities without loss
of generality.
For details on correctness, we refer the reader
to~\cite[Sect.~4.2]{averkovschymura2020complexity}.

\subsection{Cutting Plane Model}

To be able to find~$\rc_\varepsilon(X,Y)$, Model~\eqref{eq:compactModel}
introduces two classes of variables: variables~$u$ and~$s$ model which
inequalities are used and subsets of~$Y$ that are
separated by the selected inequalities, respectively, whereas variables~$a$
and~$b$ guarantee that the subsets defined by~$s$ can be cut by valid
inequalities for~$X$.
The problem of computing~$\rc_\varepsilon(X,Y)$ can thus be interpreted as
a two stage problem, where the first stage selects a set of subsets of~$Y$
and the second stage checks whether the selected subsets correspond to
feasible cut patterns.
Since the first stage variables are binary and the second stage problem is
a feasibility problem, logic-based Benders decomposition can be used to
compute~$\rc_\varepsilon(X,Y)$, see~\cite{Hooker2000}.
While classical Benders decomposition requires the subproblem to be a
linear programming problem, logic-based Benders decomposition allows the
subproblem to be an arbitrary optimization problem.

Let~$\mathcal{C} = \{ C \subseteq Y \st C \text{ and } X \text{ are not
  linearly $\varepsilon$-separable}\}$.
We refer to~$\mathcal{C}$ as the \emph{conflict set}.
For all~$(C, i) \in \mathcal{C} \times [k]$, the \emph{conflict inequality}
${\sum_{y \in C} s_{yi} \leq \card{C} - 1}$ models that not all points in~$C$
can be cut by an inequality valid for~$X$.
Consequently,
\begin{subequations}
  \label{eq:conflictModel}
  \begin{align}
    \min \sum_{i = 1}^k u_i &&&\\
    \sum_{i = 1}^k s_{yi} &\geq 1, && y \in Y,
    \label{eq:conflictModelCovering}\\
    \sum_{y \in C} s_{yi} &\leq \card{C} - 1, && C \in \mathcal{C},\; i \in [k],
    \label{eq:conflictModelConflict}\\
    s_{yi} &\leq u_i, && y \in Y,\; i \in [k],
    \label{eq:conflictModelLink}\\
    s_{yi},\; u_i &\in \bin{}, && y \in Y,\; i \in [k].
  \end{align}
\end{subequations}
is an alternative model for computing~$\rc_{\varepsilon}(X,Y)$.

\subsection{Column Generation Model}

Let~$\I = \{ I \subseteq Y \st I \text{ and } X \text{ are linearly
  $\varepsilon$-separable}\}$.
Then, $\rc_\varepsilon(X,Y)$ is the smallest number~$\ell$ of sets~$I_1,
\dots, I_\ell \in \I$ such that~$Y = \bigcup_{i = 1}^\ell I_i$.
Thus, instead of using the matrix~$s \in \bin{Y \times [k]}$ to encode which
inequality cuts which points from~$Y$, we can introduce for every~$I \in \I$ a binary variable~$z_I \in \bin{}$ that encodes whether an inequality separates~$I$ or not:
\begin{subequations}
  \label{eq:CGmodel}
  \begin{align}
    \min \sum_{I \in \I} z_I &&&\\
    \sum_{I \in I_y} z_I &\geq 1, && y \in Y,\label{eq:CGmodelCovering}\\
    z &\in \Z_+^{\I},&&
  \end{align}
\end{subequations}
where~$I_y = \{ I \in \I \st y \in I\}$.
\begin{remark}
  In contrast to Model~\eqref{eq:compactModel},
  Models~\eqref{eq:conflictModel} and~\eqref{eq:CGmodel} do not directly
  provide an~$\varepsilon$-relaxation of~$X$ w.r.t.~$Y$.
  To find such a relaxation, $\rc_\varepsilon(X,Y)$ many linear programs need to
  be solved in a post-processing step.
\end{remark}

\section{Comparison of Basic Models}
\label{sec:modelComparison}

While the compact model~\eqref{eq:compactModel} can be immediately handed to an MIP solver due to the relatively small number of variables and
constraints, the cutting plane model~\eqref{eq:conflictModel} and column
generation model~\eqref{eq:CGmodel} require to implement separation and
pricing routines, respectively.
At least for the column generation model, this additional computational
effort comes with the benefit of a stronger LP relaxation in comparison
with the compact model.
To make this precise, we denote by~$\vcompact$, $\vcut$, and~$\vCG$ the optimal
LP relaxation value of the compact, cutting plane, and column generation
model, respectively.
\begin{proposition}
  \label{prop:qualityCut}
  Let~$X \subseteq \Z^d$ be finite and lattice-convex, let~$Y \subseteq
  \Z^d \setminus X$ be finite, let~$\varepsilon > 0$ such
  that~$\rc_\varepsilon(X,Y)$ exists, and suppose both
  Models~\eqref{eq:compactModel} and~\eqref{eq:conflictModel} are
  feasible.
  \begin{enumerate}
  \item Then, $\vcompact \geq \vcut = 1$.

  \item\label{item2} Moreover, if~$\varepsilon \leq (d-1)(\rho_X + \rho_Y)$, then~$\vcompact = 1$.
  \end{enumerate}
\end{proposition}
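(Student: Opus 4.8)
The plan is to derive both statements from a single lower bound together with two explicit feasible fractional solutions of value~$1$. Throughout I may assume $Y \neq \emptyset$, since otherwise $\rc_\varepsilon(X,Y) = 0$ and both LP values vanish, making the statement degenerate.

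First I would establish $\vcompact \ge 1$ and $\vcut \ge 1$ at once, exploiting that the covering constraints \eqref{eq:compactModelCovering}/\eqref{eq:conflictModelCovering} and the linking constraints \eqref{eq:compactModelLink}/\eqref{eq:conflictModelLink} coincide in both models. Fixing any $y \in Y$, the covering constraint gives $\sum_{i=1}^k s_{yi} \ge 1$, while summing the linking constraints $s_{yi} \le u_i$ over $i$ yields $\sum_{i=1}^k u_i \ge \sum_{i=1}^k s_{yi} \ge 1$. Hence both LP optima are at least~$1$, and it remains to produce feasible fractional points attaining this value.

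For $\vcut = 1$ I would test the uniform point $s_{yi} = u_i = 1/k$ for all $y \in Y$ and $i \in [k]$, which has objective $\sum_i u_i = 1$ and trivially satisfies covering and linking. The only nontrivial constraints are the conflict inequalities \eqref{eq:conflictModelConflict}, and here two small observations do the work. Since $\rc_\varepsilon(X,Y)$ exists, every singleton $\{y\}$ is $\varepsilon$-separable from~$X$, so every $C \in \mathcal{C}$ has $\card{C} \ge 2$; moreover, $\mathcal{C} \neq \emptyset$ forces $\rc_\varepsilon(X,Y) \ge 2$, whence feasibility of \eqref{eq:conflictModel} yields $k \ge 2$. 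Consequently $\sum_{y \in C} s_{yi} = \card{C}/k \le \card{C}/2 \le \card{C} - 1$ for all $C \in \mathcal{C}$, so the uniform point is feasible and $\vcut \le 1$. Combined with the lower bound this gives $\vcompact \ge \vcut = 1$.

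For the second part I would show $\vcompact \le 1$ by lifting the same uniform choice $s_{yi} = u_i = 1/k$ to the compact model via the degenerate inequalities $a_{ij} = 0$ and $b_i = 0$. Every constraint except the separation constraints \eqref{eq:compactModelInf} holds trivially, and \eqref{eq:compactModelInf} collapses to $0 \ge \varepsilon - M(1 - 1/k)$, i.e.\ to $M(1 - 1/k) \ge \varepsilon$. This is exactly where the hypothesis enters and is, I expect, the main obstacle: one must bound the big-$M$ slack against~$\varepsilon$. For $k \ge 2$ we have $1 - 1/k \ge 1/2$, and the assumption $\varepsilon \le (d-1)(\rho_X + \rho_Y)$ gives $M \ge d(\rho_X + \rho_Y) + \varepsilon \ge (d-1)(\rho_X + \rho_Y) + \varepsilon \ge 2\varepsilon$, so $M(1 - 1/k) \ge M/2 \ge \varepsilon$ and the point is feasible. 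The residual case $k = 1$ is immediate, since feasibility then forces $\rc_\varepsilon(X,Y) = 1$ and the LP value is squeezed between the lower bound~$1$ and the integer optimum~$1$. Everything beyond the verification of \eqref{eq:compactModelInf} is routine bookkeeping.
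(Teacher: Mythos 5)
Your proof is correct and follows essentially the same strategy as the paper: a covering-plus-linking lower bound of $1$ for both models, followed by explicit fractional feasible solutions of value $1$. The only differences lie in the witnesses---you use uniform weights $1/k$ together with the trivial inequalities $\sprod{0}{x} \le 0$, where the paper concentrates weights $\varepsilon/M$ and $1 - \varepsilon/M$ on two inequality slots and uses the valid inequality $x_1 \le \rho_X$---and your compact-model witness in fact only requires the weaker hypothesis $\varepsilon \le d(\rho_X + \rho_Y)$, although this does not change the structure of the argument.
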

Note that~\ref{item2} is a technical assumption that is almost always
satisfied in practice, e.g., to approximate~$\rc(X,Y)$
by~$\rc_\varepsilon(X,Y)$, one selects~$\varepsilon < 1 \leq (d-1)(\rho_X +
\rho_Y)$.
Thus, $\vcut = \vcompact = 1$ in all relevant cases.
\begin{proof}
  First we show~$\vcut \geq 1$ and~$\vcompact \geq 1$.
  Observe that we get for every (partial) feasible solution~$(s,u)$
  and every~$\bar{y} \in Y$ the estimation
  \[
    \sum_{i = 1}^k u_i
    \geq
    \sum_{i = 1}^k \max\{s_{yi} \st y \in Y\}
    \geq
    \sum_{i = 1}^k s_{\bar{y}i}
    \geq
    1,
  \]
  where~$k$ is the upper bound used in Model~\eqref{eq:compactModel}
  of~\eqref{eq:conflictModel}.
  Hence, $\vcut \geq 1$ and $\vcompact \geq 1$.
  If the upper bound~$k = 1$, we thus have necessarily~$\vcut = 1$.
  If~$k \geq 2$, we construct a feasible solution
  for~\eqref{eq:conflictModel} with objective value~1 by assigning
  all variables value~0 except for~$s_{yi}$, $(y,i) \in Y \times [2]$,
  $u_1$, and~$u_2$, which get value~$\frac{1}{2}$.
  Indeed, the left-hand side of each conflict inequality evaluates to~$\frac{\card{C}}{2}$,
  while the right-hand side is~$\card{C} - 1$.
  Thus, because~$\card{C} \geq 2$ for any conflict as~$X$ is
  lattice-convex, the find~$\frac{\card{C}}{2} \leq \card{C} - 1$, i.e.,
  all conflict inequalities are satisfied.
  Since the remaining inequalities hold trivially, $1 \geq \vcut$ follows.
  Consequently, $\vcompact \geq 1 \geq \vcut \geq 1$.

  For the second statement, we assume~$k \geq 2$, because
  otherwise~${\vcompact = 1}$ follows as above.
  We define a feasible solution with objective value~1 of
  Model~\eqref{eq:compactModel} by assigning all variables value~0 except
  for
  \begin{itemize}
  \item $u_1 = s_{y1} = \frac{\varepsilon}{M}$ for all~$y \in Y$;
  \item $u_2 = s_{y2} = 1 - \frac{\varepsilon}{M}$ for all~$y \in Y$;
  \item $a_{11} = 1$ and~$b_1 = \rho_X$.
  \end{itemize}
  The inequalities~$\sprod{a_{i \cdot}}{x} \leq b_i$ defined this way are
  either~$0 \leq 0$ or~$x_1 \leq \rho_X$, which are valid for~$X$.
  Moreover, the Inequalities~\eqref{eq:compactModelInf} are satisfied,
  because for~$i = 1$ and every~$y \in Y$, we have
  \[
    \sum_{j = 1}^d a_{1j}y_j - b_1
    =
    y_1 - \rho_X
    \geq
    -\rho_Y - \rho_X
    \geq
    -d(\rho_X + \rho_Y) + \varepsilon
    \geq
    \varepsilon - M(1 - s_{y1}),
  \]
  and for the remaining~$i \geq 2$, we get
  $
  \varepsilon - M(1 - s_{y2})
  =
  0.
  $
  Since one can easily check that the remaining inequalities
  of~\eqref{eq:compactModel} are also satisfied, $\vcompact \leq 1$
  follows, concluding the proof using the first part of the assertion.
\end{proof}
The value of the LP relaxations thus does not indicate whether the compact
or cutting plane model performs better in practice.
An advantage of the latter is that the conflict inequalities encode a
hypergraph coloring problem, which is a structure appearing frequently in
practice.
Hence, there might be a chance that a solver can exploit this structure if
sufficiently many inequalities have been separated.
The compact model, however, might have the advantage that the $a$-
and~$b$-variables guide the solver in the right direction when branching
on~$s$- or~$u$-variables, because feasibility is already encoded in the
model and does not need to be added to the model by separating cutting
planes.
\begin{proposition}
  \label{prop:qualityCG}
  Let~$X \subseteq \Z^d$ be finite and lattice-convex, let~$Y \subseteq
  \Z^d \setminus X$ be finite, let~$\varepsilon > 0$ be such
  that~$\rc_\varepsilon(X,Y)$ exists, and suppose both
  Models~\eqref{eq:compactModel} and~\eqref{eq:conflictModel} are
  feasible.
  Let~$k$ be the number of inequalities encoded in
  Model~\eqref{eq:compactModel}.
  \begin{enumerate}
  \item If there exists an optimal solution of the LP relaxation
    of~\eqref{eq:CGmodel} that assigns at most~$k$ variables a positive
    value, then $\vCG \geq \vcompact \geq \vcut = 1$.

  \item We have $\vCG \geq H(X,Y)$, and this can be strict.
  \end{enumerate}
\end{proposition}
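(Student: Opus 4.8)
The plan is to treat the two parts separately: an explicit solution transfer for the first, and LP duality (plus an odd-cycle-type instance) for the second.

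For part~1, I would prove $\vCG \ge \vcompact$ by converting an optimal LP solution of~\eqref{eq:CGmodel} into a feasible LP solution of the compact model~\eqref{eq:compactModel} of equal objective value; combined with Proposition~\ref{prop:qualityCut} this yields the full chain $\vCG \ge \vcompact \ge \vcut = 1$. Let $z^\star$ be an optimal LP solution of~\eqref{eq:CGmodel} whose positive entries are $z^\star_{I_1},\dots,z^\star_{I_m}$ with $m \le k$, as granted by the hypothesis. Since each $I_j \in \I$, there is a vector $a^{(j)}$ with $\|a^{(j)}\|_\infty = 1$ that is valid for $X$ and $\varepsilon$-separates every $y \in I_j$; I set $b_j = \max_{x \in X} \sprod{a^{(j)}}{x}$, which lies in $[-d\rho_X, d\rho_X]$. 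I would place these $m$ inequalities into the first $m$ of the $k$ slots of~\eqref{eq:compactModel}, fill the remaining slots with the trivial inequality $0 \le 0$, and set $u_j = z^\star_{I_j}$ together with $s_{yj} = z^\star_{I_j}$ for $y \in I_j$ and $s_{yj}=0$ otherwise. The covering constraints~\eqref{eq:compactModelCovering} then transfer verbatim from~\eqref{eq:CGmodelCovering}, the linking and scaling constraints hold by construction, and~\eqref{eq:compactModelInf} follows from the $\varepsilon$-separation bound when $y \in I_i$ and from $M \ge d(\rho_X+\rho_Y)+\varepsilon$ together with $\|a^{(i)}\|_\infty \le 1$ when $s_{yi}=0$. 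The objective equals $\sum_{j} z^\star_{I_j} = \vCG$, so $\vcompact \le \vCG$. The bound $m \le k$ is precisely what lets the $m$ inequalities fit into the $k$ available slots, which is where the hypothesis is used.

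For the inequality $\vCG \ge H(X,Y)$ in part~2, I would pass to the LP dual of~\eqref{eq:CGmodel}, namely $\max \sum_{y \in Y}\pi_y$ subject to $\sum_{y \in I}\pi_y \le 1$ for all $I \in \I$ and $\pi \ge 0$, and exhibit a dual-feasible point of value $H(X,Y)$. Taking a maximum $Y$-hiding set $H$ and setting $\pi_y = 1$ for $y \in H$ and $\pi_y = 0$ otherwise, feasibility reduces to $\card{I \cap H} \le 1$ for every $I \in \I$. Indeed, if two distinct points $y_1,y_2 \in H$ both lay in a separable set $I$, then a common inequality valid for $X$ would strictly exceed its right-hand side at both $y_1$ and $y_2$, hence on all of $\conv(\{y_1,y_2\})$; this contradicts $\conv(\{y_1,y_2\}) \cap \conv(X) \neq \emptyset$, where the inequality is valid. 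Weak LP duality then gives $\vCG \ge \sum_{y \in Y}\pi_y = H(X,Y)$.

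For strictness I would exhibit a concrete instance whose separability structure mimics an odd cycle. Choosing $X$ and $Y = \{y_1,\dots,y_5\}$ so that the only $\varepsilon$-separable subsets of size at least two are the five consecutive pairs $\{y_i,y_{i+1}\}$ (indices modulo $5$), the fractional set-cover LP value becomes $\tfrac52$, attained by assigning weight $\tfrac12$ to each consecutive pair, whereas any genuine hiding set is a clique in the non-separability graph and therefore has size at most $2$; thus $\vCG = \tfrac52 > 2 \ge H(X,Y)$. The main obstacle is the geometric realization: one must produce explicit integer points in some $\Z^d$ for which exactly the intended pairs are $\varepsilon$-separable and for which the non-separable diagonal points really satisfy the hiding condition $\conv(\{y_i,y_j\}) \cap \conv(X) \neq \emptyset$. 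I would either construct such points directly or certify the separability pattern using the models developed in this article, and then read off $\vCG$ and the bound on $H(X,Y)$ from the resulting combinatorial structure.
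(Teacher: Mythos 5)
Your part~1 is essentially the paper's own argument: the same solution transfer from an optimal CG solution with at most~$k$ positive entries into the LP relaxation of~\eqref{eq:compactModel}, with the same verification of \eqref{eq:compactModelCovering} via \eqref{eq:CGmodelCovering} and of \eqref{eq:compactModelInf} split into the cases $s_{yi}>0$ and $s_{yi}=0$ (the paper fills the unused slots with $a_{i\cdot}=0$, $b_i=1$ instead of your $0\le 0$; immaterial). Your proof of $\vCG \geq H(X,Y)$ is also the same argument in dual clothing: the paper proves the key fact that no $I \in \I$ can contain two distinct points of a hiding set (i.e.\ $I_{y_1}\cap I_{y_2}=\emptyset$) and then sums the covering constraints over $y\in H$, which is exactly weak duality applied to your dual point $\pi=\mathbf{1}_H$. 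Both of these parts are correct.

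The genuine gap is the strictness claim in part~2. ``This can be strict'' requires exhibiting an instance, and you do not produce one: you describe a desired separability pattern (only the five consecutive pairs of a $5$-cycle are $\varepsilon$-separable), from which $\vCG=\tfrac52 > 2 \geq H(X,Y)$ would indeed follow, but you explicitly leave the geometric realization open and call it ``the main obstacle.'' That realization is the entire content of the claim: you would need explicit lattice-convex $X$ and $Y\subseteq\Z^d$, a verification that every non-consecutive pair is non-separable for the given $\varepsilon$ while consecutive pairs are separable (which also rules out separable sets of size $\geq 3$, since the independence number of $C_5$ is $2$), and a verification that cliques of the segment-intersection graph, which is what bounds $H(X,Y)$, have size at most $2$. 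The paper sidesteps all of this with a concrete small example: $X=\bin{2}$ and $Y$ the twelve integer points at $\ell_\infty$-distance $1$ from $X$, for which the maximum hiding set has size $2$ while the CG LP relaxation has value $\tfrac83$. To complete your proof, either carry out your $5$-cycle construction with explicit coordinates and these verifications, or replace it by such a concrete instance.
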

\begin{proof}
  To show~$\vCG \geq \vcompact$, recall that for each~$I \in \I$ there
  exists an inequality~$\sprod{a(I)}{x} \leq b(I) + \varepsilon$
  separating~$I$ and~$X$.
  Due to rescaling, we may assume that~$a(I) \in [-1,1]^d$ and~$b(I) \in [-d
  \rho_X, d \rho_X]$.

  If we are given a solution~$z \in \R_+^\I$ of~\eqref{eq:CGmodel} with at
  most~$k$ non-zero entries, we define a solution
  of the LP relaxation of~\eqref{eq:compactModel} with the same objective value as
  follows.
  Let~$I_1, \dots, I_{\ell} \in \I$ be the indices of non-zero entries
  in~$z$.
  For each~$i \in [\ell]$ and~$y \in Y$, define
  \[
    s_{yi} =
    \begin{cases}
      z_{I_i}, & \text{if } y \in I_i,\\
      0, & \text{otherwise},
    \end{cases}
    \qquad
    \text{and}
    \qquad
    u_i = z_{I_i}.
  \]
  For~$i \in \{\ell + 1, \dots, k\}$ and~$y \in Y$, we define~$s_{yi} = 0$
  and~$u_i = 0$.
  Finally, let~$a_{ij} = a(I_i)_j$ and~$b_i = b(I_i)$ for~$(i,j) \in
  [\ell] \times [d]$.
  For~$i \in \{\ell + 1, \dots, k\}$, define~$a_{ij} = 0$ and~$b_i = 1$.
  Indeed, this solution adheres to~\eqref{eq:compactModelFeas}
  since~$(a,b)$ defines valid inequalities, and
  also~\eqref{eq:compactModelLink}--\eqref{eq:compactModelScaleB} hold
  trivially.
  By definition, $s$ and~$u$ also satisfy the box constraints corresponding
  to~\eqref{eq:compactModelBin}.
  To see that~\eqref{eq:compactModelCovering} holds, note that for each~$y
  \in Y$,
  \[
    \sum_{i = 1}^k s_{yi}
    =
    \sum_{i \in [\ell]\colon y \in I_i} z_{I_i}
    \overset{\eqref{eq:CGmodelCovering}}{\geq}
    1,
  \]
  since~$z$ is feasible for the LP relaxation of~\eqref{eq:CGmodel}.
  For the last constraint~\eqref{eq:compactModelInf}, note that the
  constraint is trivially satisfied if~$s_{yi} = 0$.
  If~$s_{yi} > 0$, then~$\sprod{a_{i \cdot}}{x} \leq b_i$ corresponds to an
  inequality separating~$X$ and~$y$, which finally shows that the newly
  defined solution is feasible for the LP relaxation
  of~\eqref{eq:compactModel}.
  To conclude, note that~$\sum_{i = 1}^k u_i = \sum_{i = 1}^\ell z_{I_i}$.
  Hence, $\vCG \geq \vcompact$ and the remaining estimations hold by
  Proposition~\ref{prop:qualityCut}.

  For the second part,
  let~$H \subseteq Y$ be a hiding set for~$X$ and let~$z \in \R_+^\I$ be an
  optimal solution of the LP relaxation of~\eqref{eq:CGmodel}.
  Then, for distinct~$y_1, y_2 \in H$, we have~$I_{y_1} \cap I_{y_2} =
  \emptyset$.
  Consequently, we can estimate
  \[
    \vCG
    =
    \sum_{I \in \I} z_I
    \geq
    \sum_{y \in H} \sum_{I \in \I_y} z_I
    \overset{\eqref{eq:CGmodelCovering}}{\geq}
    \card{H},
  \]
  which shows~$\vCG \geq H(X,Y)$.

  To see that the inequality can be strict, consider~$X = \bin{2}$ and
  let~$Y$ be all infeasible points in~$\Z^2$ with~$\ell_\infty$-distance~1
  from~$X$.
  One can readily verify that a maximum hiding set for~$X$ has size~2,
  while the LP relaxation of~\eqref{eq:CGmodel} has value~$\frac{8}{3}$.
\end{proof}
If~$Y$ contains a hiding set of size at least~2, the column
generation model is thus strictly stronger than the compact and cutting plane
model.
In particular, the gap between~$\vCG$ and~$\vcut$ (and~$\vcompact$) can be
arbitrarily large: if~$d=2$ and $Y = \obs(X)$, there is always a
hiding set of size~$\rc(X,Y) - 1$, see~\cite[Thm.~23]{AverkovEtAl2021}.

\section{Enhancements of Basic Models and Algorithmic Aspects}
\label{sec:modelEnhancements}

In their basic versions, the compact and cutting plane model are rather
difficult to solve for a standard MIP solver, e.g., because not enough
structural properties of~$\rc_\varepsilon(X,Y)$ are encoded in the models
that are helpful for a solver.
Moreover, the cutting plane and column generation model require to solve a
separation and pricing problem, respectively, to be used in practice.
In this section, we discuss these aspects and suggest model improvements.

\subsection{Incorporation of Structural Properties}

In the following, we describe cutting planes, propagation algorithms, and
techniques to handle symmetries and redundancies in the compact and cutting plane
model.

\paragraph{\textbf{Cutting Planes}}
In both the compact and cutting plane model, variable~$s_{yi}$ encodes whether
a point~$y \in Y$ is separated by inequality~$i \in [k]$.
To strengthen the compact model and the initial LP without separated
inequalities in the cutting plane model, we can add inequalities that rule
out combinations of points from~$Y$ that cannot be separated
simultaneously.

For any hiding set~$H \subseteq Y$, the \emph{hiding set cut}
\begin{align*}
  \sum_{y \in H} s_{yi} &\leq 1, & i \in [k]
\end{align*}
encodes that each inequality~$i \in [k]$ can separate at most one element
from a hiding set.
Although these cuts are the stronger the bigger the underlying hiding set, we
add these inequalities just for hiding sets of size~2.
The reason for this is that such hiding sets can be found easily by iterating
over all pairs~$(y_1, y_2)$ of distinct points in~$Y$ and checking whether the line
segment~$\conv(\{y_1, y_2\})$ intersects~$\conv(X)$ non-trivially.
In our implementation, we insert the expression~$\lambda y_1 +
(1-\lambda)y_2$ in each facet defining inequality of~$\conv(X)$ to derive
bounds on the parameter~$\lambda$.
Then, the final bounds on~$\lambda$ are within~$[0,1]$ if and only
if~$\{y_1,y_2\}$ is a hiding set.

For hiding sets of arbitrary cardinality, the task is more
difficult, because there might exist exponentially many
hiding sets.
Thus, we are relying on a separation routine for hiding set cuts.
The separation problem for hiding set cuts, however, is at least as
difficult as finding a maximum hiding set for~$X$, and the complexity of
the latter is open.

\paragraph{\textbf{Propagation}}
Suppose we are solving the compact and cutting plane model using
branch-and-bound.
At each node of the branch-and-bound tree, there might exist some binary
variables that are fixed to~0 or~1, e.g., by branching decisions.
The aim of propagation is to find further variable fixings based on the
already existing ones.

Our first propagation algorithm is based on the following observation.
\begin{observation}
  Suppose some~$s$-variables have been fixed and let~$i \in [k]$.
  Then, $F_i \define \{y \in Y \st s_{yi} = 1\}$ can be separated from~$X$
  if and only if~$F_i' \define Y \cap \conv(F_i)$ can be separated from~$X$.
\end{observation}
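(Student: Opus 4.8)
The plan is to prove both implications directly, using that $\varepsilon$-separation is controlled by an affine functional whose lower bound is preserved under convex combinations.

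The forward implication is immediate from a set inclusion. Since every point of $F_i$ lies both in $Y$ and in $\conv(F_i)$, we have $F_i \subseteq Y \cap \conv(F_i) = F_i'$. Hence any inequality $\sprod{a}{x} \leq b$ that is valid for $X$ and $\varepsilon$-separates every point of $F_i'$ in particular $\varepsilon$-separates every point of $F_i$, so separability of $F_i'$ yields separability of $F_i$.

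For the reverse implication, I would start from a separating inequality for $F_i$ and argue that the \emph{same} inequality already works for $F_i'$. Concretely, suppose $\sprod{a}{x} \leq b$ is valid for $X$ with $\norm{a}{\infty} = 1$ and $\sprod{a}{y} \geq b + \varepsilon$ for all $y \in F_i$. Pick an arbitrary $y \in F_i' = Y \cap \conv(F_i)$ and write it as a convex combination $y = \sum_j \lambda_j y_j$ of points $y_j \in F_i$. Because $y \mapsto \sprod{a}{y}$ is affine,
\[
  \sprod{a}{y}
  =
  \sum_j \lambda_j \sprod{a}{y_j}
  \geq
  \sum_j \lambda_j (b + \varepsilon)
  =
  b + \varepsilon,
\]
so $y$ is $\varepsilon$-separated by $\sprod{a}{x} \leq b$ as well. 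As $X$ and the normalized pair $(a,b)$ are reused verbatim, this single inequality simultaneously $\varepsilon$-separates all of $F_i'$ from $X$.

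I expect no genuine obstacle here; the only point deserving care is that the safety margin survives the averaging. This works because $\varepsilon$ enters only through the common constant lower bound $b + \varepsilon$ shared by every $y_j \in F_i$, and a convex combination of numbers each at least $b + \varepsilon$ is again at least $b + \varepsilon$. In particular there is no interaction with the normalization $\norm{a}{\infty} = 1$, since $a$ is not modified when passing from $F_i$ to $F_i'$.
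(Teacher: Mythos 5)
Your proof is correct, and it is exactly the argument the paper leaves implicit: the statement appears as an Observation without proof, relying on precisely the two facts you spell out, namely $F_i \subseteq F_i'$ for one direction and linearity of $x \mapsto \sprod{a}{x}$ over convex combinations (which preserves the lower bound $b + \varepsilon$) for the other. The only cosmetic slip is that your labels are swapped relative to the statement's order---what you call the ``forward'' implication is the proof that separability of $F_i'$ implies separability of $F_i$---but both directions are fully and correctly established.
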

The \emph{convexity propagation algorithm} computes the sets~$F_i'$, $i \in
[k]$, and fixes~$s_{yi}$ to~1 for all~$y \in F_i'$.
If there is~$y' \in F_i'$ such that~$s_{y'i}$ is already fixed to~0, then
the algorithm prunes the node of the branch-and-bound tree.
This is indeed a valid operation, because
Inequalities~\eqref{eq:compactModelCovering}
and~\eqref{eq:conflictModelCovering} allow each point~$y \in Y$ to be
separated by several inequalities.

The second propagation algorithm exploits that~$F_i \cap \conv(X)$ needs to
be empty in each feasible solution.
The \emph{intersection propagation algorithm} thus iterates over all~$y \in
Y \setminus F_i$ and checks whether~$\conv(F_i \cup \{y\}) \cap \conv(X)
\neq \emptyset$.
If the check evaluates positively, $s_{yi}$ is fixed to~0.

Comparing both propagation algorithms, the convexity propagator requires to
compute only a single convex hull per set~$F_i$, whereas the intersection
propagator needs to compute~$O(\card{Y})$ convex hulls per set~$F_i$, which
can be rather expensive.
In our experiments, we will investigate whether the additional effort pays
off in reducing the running time drastically.
To avoid computing unnecessary convex hulls, we call both propagation
algorithms in our implementation only if the branching decision at the
parent node is based on a variable~$s_{yi}$, and in this case only for this
particular inequality index~$i$ and no further~$i' \in [k] \setminus \{i\}$.

\paragraph{\textbf{Symmetry Handling}}
It is well-known that the presence of symmetries slows down MIP
solvers, because symmetric solutions are found repeatedly during the
solving process leading to an exploration of unnecessary parts of the
search space.
In a solution of the compact and cutting plane model, e.g., we can permute
the inequality labels~$i \in [k]$ without changing the structure of the
solution.
For this reason, one can enforce that only one representative solution per
set of equivalent solutions is computed without losing optimal solutions.

One way of handling symmetric relabelings of inequalities is to require
that the columns of the matrix~$s \in \bin{Y \times [k]}$ are sorted
lexicographically non-increasingly.
To enforce sorted columns, we use a separation routine for orbisack minimal
cover inequalities as suggested in~\cite{HojnyPfetsch2019} and the
propagation algorithm orbitopal fixing by Bendotti et
al.~\cite{BendottiEtAl2021}.
Both algorithms' running time is in~$O(|Y| \cdot k)$.
Moreover, sorting the columns of~$s$ implies that we can also require
the~$u$-variables to be sorted, i.e., the first~$\rc_\varepsilon(X,Y)$
inequalities are
the inequalities defining an~$\varepsilon$-relaxation, which can be
enforced by adding
\begin{align}
  \label{eq:sortU}
  u_i &\geq u_{i+1}, && i \in [k-1],
\end{align}
to the problem.

Besides the symmetries of relabeling inequalities, we might also be able to
relabel points in~$Y$ without changing the structure of the problem.
This is the case if we find a permutation~$\pi$ of~$[d]$ such that~$\pi(X)
= X$ and~$\pi(Y) = Y$, where for a set~$T \subseteq \R^d$ we define~$\pi(T)
= \{\pi(t) \st t \in T\}$ and~$\pi(t) = (t_{\pi^{-1}(1)}, \dots,
t_{\pi^{-1}(d)})$.
The permutation~$\pi$ gives rise to a permutation~$\phi$ of~$Y$ and~$\psi$
of~$X$, where~$\phi(y) \define \pi(y)$ and~$\psi(x) \define \pi(x)$.
\begin{lemma}
  \label{lem:sortRows}
  Let~$(s,u)$ be a (partial) solution of Model~\eqref{eq:compactModel}
  or~\eqref{eq:conflictModel} for~$\rc_\varepsilon(X,Y)$.
  If there exists a permutation~$\pi$ of~$[d]$ such that~$\pi(X) = X$
  and~$\pi(Y) = Y$, then also~$(s',u)$ is a (partial) solution, where~$s'$
  arises from~$s$ by reordering the rows of~$s$ according to~$\phi$.
\end{lemma}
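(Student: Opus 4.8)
The plan is to exploit that the coordinate permutation $\pi$ acts on $\R^d$ as a permutation matrix $\sigma$, i.e.\ $\sigma v = \pi(v)$, which is an orthogonal map (so $\sigma^{-1} = \sigma\T$ and $\sprod{a}{v} = \sprod{(\sigma a)}{(\sigma v)}$ for all $a,v$) that additionally preserves the $\infty$-norm, since it merely permutes coordinates. Because $\pi(X) = X$ and $\pi(Y) = Y$, this single identity will let me transport any separating inequality for a subset of $Y$ to a separating inequality for its $\phi$-image, and it is exactly this covariance that turns a feasible $(s,u)$ into the row-permuted $(s',u)$. I would fix once and for all the convention $s'_{yi} \define s_{\phi^{-1}(y)i}$ (equivalently $s'_{\phi(y)i} = s_{yi}$, so the row of $s$ indexed by $y$ moves to position $\phi(y)$), leaving the $u$-variables untouched; the opposite reordering convention then follows at once by applying the lemma to the symmetry $\pi^{-1}$.

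For the cutting plane model~\eqref{eq:conflictModel}, the only model-specific ingredient is that the conflict set $\mathcal{C}$ is invariant under $\phi$. This is a direct consequence of the isometry: a set $C \subseteq Y$ is linearly $\varepsilon$-separable from $X$ by an inequality $\sprod{a}{x} \leq \beta$ with $\norm{a}{\infty} = 1$ if and only if $\phi(C) = \pi(C)$ is $\varepsilon$-separable from $\pi(X) = X$ by $\sprod{(\sigma a)}{x} \leq \beta$, because $\sprod{a}{v} = \sprod{(\sigma a)}{(\sigma v)}$ and $\norm{\sigma a}{\infty} = \norm{a}{\infty} = 1$ preserve both validity for $X$ and the margin $\geq \beta + \varepsilon$ on the separated points. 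Hence $C \in \mathcal{C} \iff \phi(C) \in \mathcal{C}$. Granting this, I would verify the three constraint families of~\eqref{eq:conflictModel}: covering~\eqref{eq:conflictModelCovering} and linking~\eqref{eq:conflictModelLink} for $(s',u)$ at a point $y$ reduce verbatim to the same constraints of $(s,u)$ at the point $\phi^{-1}(y) \in Y$, while the conflict constraint~\eqref{eq:conflictModelConflict} for a pair $(C,i)$ satisfies $\sum_{y \in C} s'_{yi} = \sum_{y' \in \phi^{-1}(C)} s_{y'i} \leq \card{\phi^{-1}(C)} - 1 = \card{C}-1$, using that $\phi^{-1}(C) \in \mathcal{C}$ by invariance and that $\phi$ is a bijection of $Y$.

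For the compact model~\eqref{eq:compactModel} I would additionally permute the inequality data by setting $a'_{i\cdot} \define \sigma a_{i\cdot}$ and $b'_i \define b_i$. Validity~\eqref{eq:compactModelFeas} survives because, for $x \in X$, $\sprod{(\sigma a_{i\cdot})}{x} = \sprod{a_{i\cdot}}{(\pi^{-1}(x))} \leq b_i$ with $\pi^{-1}(x) \in X$; the box constraints~\eqref{eq:compactModelScaleA}--\eqref{eq:compactModelScaleB} survive because $\sigma$ only permutes the entries of $a_{i\cdot}$ and fixes $b_i$. For the separation constraint~\eqref{eq:compactModelInf} at $(y,i)$, writing $y = \sigma(\phi^{-1}(y))$ gives $\sprod{(\sigma a_{i\cdot})}{y} = \sprod{a_{i\cdot}}{(\phi^{-1}(y))}$, and the original constraint at $(\phi^{-1}(y),i)$ combined with $s'_{yi} = s_{\phi^{-1}(y)i}$ yields precisely the required bound; covering~\eqref{eq:compactModelCovering} and linking~\eqref{eq:compactModelLink} are checked exactly as before. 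Note that nowhere is integrality used, so the argument applies equally to fractional and to partial assignments.

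The main point to get right is not a deep obstacle but bookkeeping discipline: one must thread the single convention $s'_{yi} = s_{\phi^{-1}(y)i}$ consistently through both models and ensure that the permutation acts on inequalities in the way dual to its action on points. Phrasing everything through the two identities $\sprod{a}{v} = \sprod{(\sigma a)}{(\sigma v)}$ and $\norm{\sigma a}{\infty} = \norm{a}{\infty}$ simultaneously delivers the $\phi$-invariance of $\mathcal{C}$ and the feasibility of the transformed inequalities, and makes the verification insensitive to whether $(s,u)$ is integral, fractional, or only partially fixed.
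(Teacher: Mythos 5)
Your proposal is correct, and it verifies exactly the right things; the core idea (transporting a separating inequality by the coordinate permutation, using that a permutation matrix preserves inner products and the $\ell_\infty$-norm) is the same as in the paper. The organization, however, differs in a meaningful way. The paper treats the cutting plane model~\eqref{eq:conflictModel} by \emph{lifting}: a feasible $(s,u)$ of~\eqref{eq:conflictModel} is first extended to a full solution $(a,b,s,u)$ of the compact model~\eqref{eq:compactModel} (possible because the conflict inequalities certify that each set $F_i=\{y \in Y \st s_{yi}=1\}$ is $\varepsilon$-separable), then the permuted tuple $(\pi(a),b,s',u)$ is argued to be feasible for~\eqref{eq:compactModel}, and finally $(s',u)$ is read off as feasible for~\eqref{eq:conflictModel}. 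You instead handle each model directly: for~\eqref{eq:conflictModel} you isolate the key fact that the conflict set $\mathcal{C}$ is $\phi$-invariant and then check the covering, conflict, and linking constraints coordinate-wise; for~\eqref{eq:compactModel} you permute the inequality data explicitly. Your route is somewhat longer but buys two things: it makes explicit the invariance statement ($C \in \mathcal{C} \iff \phi(C) \in \mathcal{C}$) that the paper leaves implicit in the phrase ``we do not change the structure of the problem,'' and, since your verification of~\eqref{eq:conflictModel} never uses integrality of $s$, it also covers fractional points of the LP relaxation --- something the paper's lifting argument cannot do, because extending $(s,u)$ to separating inequalities requires the sets $F_i$ to be genuine (integral) sets. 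The paper's route buys brevity, reusing the compact model's variables rather than re-verifying constraints twice.
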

\begin{proof}
  Suppose~$(s,u)$ is a solution of Model~\eqref{eq:conflictModel}.
  Then, $(s,u)$ can be extended to a solution of
  Model~\eqref{eq:compactModel}, i.e., there exist~$k$
  inequalities~$\sum_{j = 1}^d a_{ij}x_j \leq b_i$, ${i \in [k]}$, such that
  the~$i$-th inequality separates the points in~$F_i = \{y \in Y \st s_{yi}
  = 1\}$ from~$X$.
  If we apply permutation~$\pi$ to~$X$ and~$Y$, we do not change the
  structure of the problem, that is, ~$\sum_{j = 1}^d a_{ij}\pi(x)_j \leq
  b_i$, $i \in [k]$, defines also a relaxation of~$X$ w.r.t.~$Y$.
  Thus, if the original~$i$-th inequality separated point~$y \in Y$, the
  permuted inequality separates~$\phi(y)$.
  Consequently, if we define~$s'$ by relabeling the rows of~$s$ according to~$\phi$,
  $(\pi(a), b, s', u)$ is a solution of Model~\eqref{eq:compactModel}
  and thus~$(s', u)$ is a solution of Model~\eqref{eq:conflictModel}.
\end{proof}
If~$\Pi = \{ \pi \in S_d \st \pi(X) = X,\; \pi(Y) = Y\}$ and~$\Phi$ is the
group containing all~$\phi$ associated with the permutations~$\pi \in \Pi$,
Lemma~\ref{lem:sortRows} tells us that we can also force the rows of~$s$ to
be sorted lexicographically non-increasingly w.r.t.\ permutations from~$\Phi$.
In our implementation, we compute a set~$\Gamma$ of generators of the
group~$\Phi$ and enforce for each~$\gamma \in \Gamma$ that matrix~$s$ is
lexicographically not smaller than the reordering of~$s$ w.r.t.~$\gamma$.
We enforce this property by separating minimal cover
inequalities for symresacks and a propagation algorithm, see~\cite{HojnyPfetsch2019}.
Both run in~$O(k)$ time per~$\gamma \in \Gamma$.

To detect the symmetries~$\Phi$, we construct a colored bipartite graph~$G = (V,E)$.
The left side of the bipartition is given by~$X \cup Y$ and the right
side is defined as~$R = \{ (v,j) \in \Z \times [d] \st \text{there is } z \in X \cup Y
\text{ with } z_j = v\}$.
There is an edge between~$z \in X \cup Y$ and~$(v,j) \in R$ if and only
if~$z_j = v$.
Moreover, each node gets a color uniquely determining its type: all nodes in~$X$ are
colored equally with color ``$X$'', all nodes in~$Y$ are colored equally by
color~``$Y$'', and node~$(v,j) \in R$ is colored by color~``$v$''.
Then, the restriction of every automorphism~$\sigma$ of~$G$ to~$R$ corresponds to a
permutation in~$\Pi$, and thus, restricting~$\sigma$ to~$Y$ is a
permutation in~$\Phi$.

Note that the graph~$G$ defined above might not allow to detect symmetries
if a symmetric arrangement of~$X$ and~$Y$ is translated asymmetrically.
For example, if~$X = t + \Delta_2$, $Y = t + (\Delta_2 + \lozenge_2) \setminus \Delta_2$, and~$t = \binom{1}{2}$,
then there is no permutation keeping~$X$ invariant.
For this reason, we use in the construction of~$G$ relative coordinates.
That is, for each coordinate~$j \in [d]$, we compute~$\mu_j = \min_{z \in X \cup Y}
z_j$ and translate~$X \cup Y$ by~$-\mu$ before building~$G$.

\medskip
Another way of handling symmetries for the compact
model~\eqref{eq:compactModel} is to handle symmetries of the
inequalities~$\sum_{j = 1}^d a_{ij}x_j \leq b_i$ defined in the model.
We can reorder the inequalities~$\sum_{j = 1}^d a_{ij} x_j \leq b_i$,
$i \in [k]$ that are (not) used in the relaxation, to obtain another
solution with the same objective value.
To handle these symmetries, we can add the inequalities
\begin{align}
  a_{i1} & \geq a_{(i+1)1} - 2(u_i - u_{i+1}), && i \in [k-1].\label{eq:sortA}
\end{align}
Inequalities~\eqref{eq:sortA} sort the inequalities (not) present in a
relaxation by their first coefficient.
The inequalities are compatible with Inequalities~\eqref{eq:sortU}, but not
necessarily with the lexicographic ordering constraints.
The latter is the case because cutting the point~$y \in Y$ associated with
the first row of matrix~$s$ might require a very small first coefficient in
any separating inequality, whereas other points might require a very large
first coefficient.
In our experiments, we will investigate which symmetry handling method
works best for the compact and cutting plane model.

Finally, additional redundancies in Model~\eqref{eq:compactModel} can be
handled by enforcing that~$\sum_{j = 1}^d a_{ij} x_j \leq b_i$ becomes the
trivial inequality~$\sprod{0}{x} \leq d \rho_X$ if it is not used in a
relaxation of~$X$ (i.e., \mbox{$u_i = 0$}).
This removes infinitely many equivalent solutions from the search space,
and can be modeled by replacing~\eqref{eq:compactModelScaleA} by
\begin{align*}
  -u_i \leq a_{ij} \leq u_i, && (i,j) \in [k] \times [d],
\end{align*}
and the lower bound constraint in~\eqref{eq:compactModelScaleB} by
\begin{align*}
  d \rho_X \leq b_i + 2 d \rho_X u_i, && i \in [k].
\end{align*}
This method is compatible with both the lexicographic symmetry handling
approach and Inequalities~\eqref{eq:sortA}.

\subsection{Algorithmic Aspects of the Cutting Plane Model}

To be able to deal with the exponentially many conflict
inequalities~\eqref{eq:conflictModelConflict} in the cutting plane
Model~\eqref{eq:conflictModel}, we are relying on a separation routine.
We start by discussing the case that the point~$s^\star$ to be separated is
contained in~$\bin{Y \times [k]}$, i.e., for each of the~$k$ inequalities
we already know which points it is supposed to separate.

To check whether~$s^\star \in \bin{Y \times [k]}$ satisfies all conflict inequalities, we can
compute for each~$i \in [k]$ the set~$F_i = \{ y \in Y \st s^\star_{yi} =
1\}$, and build a linear program similar to Model~\eqref{eq:compactModel}
that decides whether~$X$ and~$F_i$ are $\varepsilon$-separable.
If the answer is yes, we know~$s^\star$ is feasible.
Otherwise, we have found a violated conflict inequality, namely $\sum_{y
  \in F_i} s_{yi} \leq \card{F_i} - 1$.
Of course, this inequality will be rather weak in practice, because it
excludes only the single assignment~$F_i$.

One way to strengthen the inequality is to search for a minimum cardinality
subset~$F_{\min}$ of~$F_i$, which cannot be separated from~$X$.
The corresponding inequality~$\sum_{y \in F_{\min}} s_{yi} \leq \card{F_{\min}} - 1$ then
does not only cut off~$s^\star$, but every solution that assigns
inequality~$i$ all points from~$F_{\min}$.
However, we do not expect that~$F_{\min}$ can be computed efficiently,
because detecting a minimum cardinality set of inequalities whose removal
leads to a feasible LP is NP-hard, see Sankaran~\cite{Sankaran1993}.
Instead, we compute a minimal cardinality subset~$F \subseteq F_i$ by
initializing~$F = \emptyset$, adding points~$y \in F_i$ to~$F$ until~$F$
and~$X$ are no longer separable, and then iterating over all points~$y'$ in~$F$
and checking whether their removal leads to a separable set.
In the latter case, we keep~$y'$ in~$F$; otherwise, we remove it.
Although this procedure is costly as it requires to
solve~$\Theta(\card{F_i})$ LPs to find~$F$, preliminary experiments
revealed that the running time of the cutting plane model can be reduced
drastically when using the sparsified inequalities.

Since we expect the separation problem of~\eqref{eq:conflictModelConflict}
to be difficult even for integer points, we only heuristically separate
non-integral points~$s^\star \in [0,1]^{Y \times [k]}$ in our implementation.
To this end, for each~$i \in [k]$, we again initialize an empty set~$F$ and
iteratively add~$y \in Y$ in non-increasing order w.r.t.~$s^\star_{yi}$
until~$F \in \mathcal{C}$ and~$s^\star$ violates the inequality (or we know
that such an inequality cannot be violated).

\subsubsection{Algorithmic Aspects of the Column Generation Model}

In contrast to the compact model~\eqref{eq:compactModel}, the number of variables
in~\eqref{eq:CGmodel} grows exponentially in~$\card{Y}$, which makes it
already challenging to solve the LP relaxation of~\eqref{eq:CGmodel}.
In our implementation, we thus use a branch-and-price procedure for
solving~\eqref{eq:CGmodel}, i.e., we use a branch-and-bound procedure in
which each LP relaxation is solved by column generation.
In the following, we discuss the different components of the
branch-and-price procedure.

\paragraph{\textbf{Solving the Root Relaxation}}
At the root node of the branch-and-bound tree, we are given a subset~$\I'$ of
all possible variables in~$\I$ and solve the LP relaxation
of~\eqref{eq:CGmodel} restricted to the variables in~$\I'$.
To check whether the solution obtained for the variables in~$\I'$ is indeed
an optimal solution of the LP relaxation, we need to solve the pricing
problem, i.e., to check whether all
variables in~$\I$ have non-negative reduced costs.
Since the pricing problem is equivalent to the separation problem for the
dual, we determine the dual of the root node LP relaxation
of~\eqref{eq:CGmodel}, which is given by
\begin{subequations}
  \begin{align}
    \label{eq:dualCG}
    \max \sum_{y \in Y} \alpha_y &&&\\
    \sum_{y \in I} \alpha_y &\leq 1, && I \in \I,\\
    \alpha_y &\geq 0, && y \in Y.
  \end{align}
\end{subequations}
The pricing problem at the root node is thus to decide, for given dual
weights~$\alpha_y$, $y \in Y$, whether there exists a set~$I \in \I$
with~$\sum_{y \in I} \alpha_y > 1$.
Unfortunately, we cannot expect to solve this problem efficiently in
general.
\begin{proposition}
  Let~$X \subseteq \Z^d$ be finite and lattice-convex, let~$Y \subseteq
  \Z^d \setminus X$ be finite, and let~$\alpha_y \geq 0$ be a rational weight for~$y
  \in Y$.
  Then, the pricing problem for the LP relaxation of~\eqref{eq:CGmodel},
  i.e., deciding whether there exists~$I \in \I(X,Y)$ with
  $\sum_{y \in Y} \alpha_y > 1$, is NP-hard.
\end{proposition}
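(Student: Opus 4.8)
The plan is to reduce a known NP-hard problem to the pricing problem.The plan is to give a polynomial-time many-one reduction from the \emph{densest open hemisphere} problem of Johnson \& Preparata, which is NP-complete when the dimension is part of the input: given nonzero integer vectors $v_1,\dots,v_n \in \Z^d$ and an integer $k$, decide whether there is a direction $a$ with $\sprod{a}{v_i} > 0$ for at least $k$ indices~$i$. The starting observation is that the pricing problem asks precisely for a set $I \in \I$ of maximum $\alpha$-weight: a violated dual inequality is exactly some $I \in \I$ with $\sum_{y \in I}\alpha_y > 1$. I therefore want to engineer an instance in which maximum-weight $\varepsilon$-separable subsets of~$Y$ correspond to open hemispheres containing many of the~$v_i$.

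Concretely, I would set $X = \{\zero\}$ and $Y = \{v_1,\dots,v_n\}$ (after discarding zero vectors and assuming the $v_i$ distinct, which never changes the hemisphere count), so that $Y \subseteq \Z^d \setminus X$ and $X$ is trivially lattice-convex. The key structural fact is that, with $X = \{\zero\}$, a subset $I \subseteq Y$ is linearly separable from~$X$ if and only if $\zero \notin \conv(I)$, which by the separating-hyperplane theorem holds if and only if there is a direction $a$ with $\sprod{a}{y} > 0$ for every $y \in I$, i.e.\ $I$ lies in an open hemisphere. To read off the threshold~$k$, I assign the uniform rational weight $\alpha_y = \tfrac{2}{2k-1}$ to every $y \in Y$, so that a subset of cardinality~$m$ has weight $\tfrac{2m}{2k-1}$, which exceeds~$1$ exactly when $m \ge k$. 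Under this encoding, a set $I \in \I$ with $\sum_{y \in I}\alpha_y > 1$ exists if and only if some open hemisphere contains at least~$k$ of the vectors~$v_i$.

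The one technical point the reduction must address is the passage from \emph{genuine} linear separability (disjoint convex hulls) to \emph{$\varepsilon$-separability} with the fixed margin~$\varepsilon$ and the normalization $\norm{a}{\infty} = 1$. I would handle this by choosing $\varepsilon$ small: for each of the finitely many subsets $I \subseteq Y$ with $\zero \notin \conv(I)$, the best achievable margin $\max\{\,t : \sprod{a}{y} \ge t \text{ for all } y \in I,\ \norm{a}{\infty} \le 1\,\}$ is the optimum of a linear program with integer data, hence is attained at a vertex and, by the standard Cramer's-rule bound, is at least $2^{-p}$ for some $p$ polynomial in the input bit-size. Taking $\varepsilon = 2^{-p}$ makes ``$I$ is $\varepsilon$-separable from~$X$'' coincide with ``$\zero \notin \conv(I)$'' while keeping $\varepsilon$ of polynomial encoding length, so the entire construction runs in polynomial time.

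I expect this last step to be the main obstacle: the remainder is a direct dictionary between separable subsets and hemispheres, but one must argue carefully that a single fixed~$\varepsilon$ of polynomially bounded size simultaneously certifies separability for every relevant subset and excludes every non-separable one. The uniform margin lower bound above supplies exactly this uniformity; once it is in place, the equivalence ``pricing answers YES $\iff$ densest open hemisphere $\ge k$'' is immediate, and NP-hardness of the pricing problem follows.
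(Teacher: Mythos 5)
Your proof is correct and takes essentially the same route as the paper: specialize to $X$ a single point (the origin) with uniform weights, so that the pricing problem becomes the densest open hemisphere problem of Johnson \& Preparata~\cite{JohnsonPreparata1978}. Your write-up is in fact more careful than the paper's two-line argument, which silently identifies $\varepsilon$-separability with strict separability, whereas you justify this identification via the uniform Cramer's-rule lower bound on the attainable margin.
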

\begin{proof}
  Note that the pricing problem is equivalent to finding a set~$I \in
  \I(X,Y)$ that maximizes the value~$\sum_{y \in I} \alpha_y$.
  If all weights~$\alpha_y$, $y \in Y$, have the same value~$\alpha > 0$,
  the problem reduces to find a set~$I \in \I$ of maximum cardinality.
  The latter problem is NP-hard even if~$X$ consists of a single point, in
  which case it reduces to the open hemisphere problem, see Johnson \&
  Preparata~\cite{JohnsonPreparata1978}.
\end{proof}
To solve the pricing problem, we use a mixed-integer program that is a
variant of~\eqref{eq:compactModel} with~$k=1$.
The only difference is that instead of minimizing the number of needed
inequalities, we maximize the expression~$\sum_{y \in Y} \alpha_y s_{y1}$.
If this value is at most~1, we have found an optimal solution of the LP
relaxation.
Otherwise, we have found a variable~$z_{I}$ with negative
reduced cost, add~$I$ to~$\I'$, and iterate this procedure until all
reduced costs are non-negative.
In our implementation, we initialize the set~$\I'$ by
\begin{align*}
  \I' &= \big\{
    \{ y \in Y : \sprod{a}{y} > b \} : \sprod{a}{x} \leq b
    \text{ defines facet of}\,\conv(X)
  \big\} \cup
  \big\{
    \{y\} : y \in Y
  \big\}.
\end{align*}

\paragraph{\textbf{Branching Strategy}}
Let~$u$ be a node of the branch-and-bound tree and denote by~$z^u$ an
optimal solution of the LP relaxation at node~$u$.
A classical branching strategy is to select a variable~$z_I$ with~$z^u_I
\notin \Z$ and to create two child nodes~$u^0$ and~$u^1$
by enforcing~$z_I = 0$ in~$u^0$ and~$z_I = 1$ in~$u^1$.
While the branching decision~$z_I = 1$ has strong implications for computing~$\rc_{\varepsilon}(X,Y)$
(we basically fix an inequality used in the relaxation), branching~$z_I =
0$ only rules out one of the exponentially many choices in~$\I$ for a
separated set.

To obtain a more balanced branching rule, we use the branching
rule suggested by Ryan \& Foster~\cite{RyanFoster1981}.
We are looking for two distinct
variables~$z_I$ and~$z_J$ with~$z^u_I, z^u_J \notin \Z$ such that both the
intersection~$I \cap J$ and symmetric difference~$I \Delta J$ of~$I$
and~$J$ are non-empty.
Let~$y_1 \in I \cap J$ and~$y_2 \in I \Delta J$.
Then, two child nodes~$u^0$ and~$u^1$ of~$u$ are created as follows.
In~$u^0$, variables~$z_{I'}$ are fixed to~0 if~$I'$ contains both~$y_1$
and~$y_2$.
In~$u^1$, we fix~$z_{I'}$ to~0 if~$I'$ contains either~$y_1$ or~$y_2$.
That is, $u^0$ enforces~$y_1$ and~$y_2$ to be contained in different sets,
and~$u^1$ forces them to be contained in the same set~$I'$.
This branching rule obviously partitions the integer solutions feasible at
node~$u$.
To show its validity it is thus sufficient to show that for every
non-integral solution~$z^u$ the sets~$I$ and~$J$ exist.
\begin{lemma}
  Let~$z^u$ be a non-integral optimal solution of the LP relaxation
  of~\eqref{eq:CGmodel} at node~$u$ of the branch-and-bound tree.
  Then, there exist two distinct sets~$I, J \in \I'$ with~$z^u_I, z^u_J
  \notin \Z$ such that~$I \cap J \neq \emptyset$ and~$I \Delta J \neq
  \emptyset$.
\end{lemma}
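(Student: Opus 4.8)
The plan is to argue by contradiction using the branching structure defined by the Ryan--Foster rule. Suppose, for contradiction, that no such pair $I, J$ exists. This means that for any two distinct sets $I, J \in \I'$ with $z^u_I, z^u_J \notin \Z$, either $I \cap J = \emptyset$ or $I \Delta J = \emptyset$. Since $I \Delta J = \emptyset$ would force $I = J$, contradicting distinctness, the only remaining possibility is $I \cap J = \emptyset$. Hence the assumption amounts to saying that the fractionally-valued sets are pairwise disjoint.

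**First I would** exploit this pairwise disjointness together with the covering constraints~\eqref{eq:CGmodelCovering} and the optimality of $z^u$. Let $\mathcal{F} = \{ I \in \I' \st z^u_I \notin \Z\}$ denote the collection of sets with fractional value. Because $z^u$ is non-integral by hypothesis, $\mathcal{F}$ is non-empty, so pick some $I_0 \in \mathcal{F}$ and any $y_0 \in I_0$. The covering constraint for $y_0$ reads $\sum_{I \in \I_{y_0}} z^u_I \geq 1$, where $\I_{y_0} = \{ I \in \I' \st y_0 \in I\}$. By pairwise disjointness of the sets in $\mathcal{F}$, the only member of $\mathcal{F}$ that can contain $y_0$ is $I_0$ itself; every other set in $\I_{y_0}$ must then be integer-valued, i.e.\ take value $0$ or $1$ in $z^u$.

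**The key step** is then to derive a contradiction with the fractionality of $z^u_{I_0}$. If some integer-valued set $I \in \I_{y_0}$ already has $z^u_I = 1$, the covering constraint for $y_0$ is satisfied with slack; because the objective $\sum_I z_I$ is being minimized and $I_0$ overlaps $I$ on $y_0$, one checks that decreasing $z^u_{I_0}$ toward $0$ keeps all covering constraints feasible (using disjointness so that lowering $z^u_{I_0}$ only loosens the coverage of points in $I_0$, each already covered by the integrally-chosen sets), strictly improving the objective and contradicting optimality. If instead no set in $\I_{y_0}$ is fixed to $1$, then the covering constraint at $y_0$ forces $z^u_{I_0} \geq 1$, and combined with an upper-bound argument (the LP relaxation of the set-cover-type model~\eqref{eq:CGmodel} can be normalized so optimal fractional values lie in $[0,1]$, or one simply notes $z^u_{I_0}$ cannot exceed $1$ at an optimum) this yields $z^u_{I_0} = 1$, contradicting $z^u_{I_0} \notin \Z$. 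Either way the assumption collapses.

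**The main obstacle** I anticipate is making the local re-optimality argument fully rigorous, since it depends on the precise normalization of the LP relaxation of~\eqref{eq:CGmodel}: one must ensure that an optimal $z^u$ can be taken with all coordinates in $[0,1]$ (so that a fractional value genuinely signals shared coverage rather than an inflated integral weight) and that the disjointness of $\mathcal{F}$ truly isolates the effect of perturbing a single fractional coordinate. A cleaner route, which I would prefer if the normalization is delicate, is purely combinatorial: assuming all fractional sets are pairwise disjoint, restrict attention to the subfamily $\mathcal{F}$ and observe that their disjoint supports, each covered to a fractional total strictly between $0$ and $1$ by a \emph{single} set, violate the covering constraint~\eqref{eq:CGmodelCovering} at any point lying in such a set --- directly contradicting feasibility of $z^u$ and thereby establishing that the desired overlapping pair $I, J$ must exist.
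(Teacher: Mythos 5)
Your primary, case-analysis argument is correct, and it is essentially the paper's own proof run in contrapositive form. The paper picks a fractional set $I$ directly, notes that optimality forces $z^u_I \in (0,1)$, uses~\eqref{eq:CGmodelCovering} to get, for every $y \in I$, another positive variable $z^u_{J^y}$ with $y \in J^y$, and then argues that if all such $J^y$ were integer-valued one could set $z^u_I$ to $0$, contradicting optimality; your proof assumes the fractional supports are pairwise disjoint, which creates exactly this ``all other covering sets are integer-valued'' situation, and derives the same contradiction by zeroing out $z^u_{I_0}$. Two remarks on the details. First, the normalization you worry about is a non-issue: at \emph{any} optimal solution of the LP relaxation of~\eqref{eq:CGmodel}, every variable satisfies $z^u_I \leq 1$, since lowering a variable exceeding $1$ down to $1$ preserves all covering constraints and strictly improves the objective; this is precisely what lets the paper assert $z^u_I \in (0,1)$. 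Second, the one claim you leave unproved --- that each point of $I_0$ is ``already covered by the integrally-chosen sets'' --- has a one-line justification you should include: for $y \in I_0$, constraint~\eqref{eq:CGmodelCovering} together with $z^u_{I_0} < 1$ forces the remaining sets containing $y$, all integer-valued by disjointness, to sum to a positive integer, hence to at least $1$. Note that this argument needs no hypothesis about $y_0$, so your split into two cases is unnecessary: the zeroing-out contradiction goes through unconditionally.

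The ``cleaner route'' you say you would prefer, however, is not a proof, and the flaw is conceptual. You claim that pairwise disjointness of the fractional supports contradicts \emph{feasibility} of $z^u$ alone, because each point of a fractional set is covered ``by a single set'' to a total below $1$. But constraint~\eqref{eq:CGmodelCovering} at such a point sums over \emph{all} sets containing it, including the integer-valued ones; disjointness restricts only the fractional sets. Concretely, take $Y = \{y_1, y_2\}$ with $\{y_1\}, \{y_1,y_2\} \in \I'$ and consider $z_{\{y_1,y_2\}} = 1$, $z_{\{y_1\}} = \frac{1}{2}$, all other variables $0$: this solution is feasible and non-integral, its fractional support consists of a single set (so the disjointness hypothesis holds vacuously), yet no covering constraint is violated, and the conclusion of the lemma fails since there do not even exist two distinct fractional sets. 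So the lemma is genuinely false for feasible but suboptimal solutions, and any argument invoking only feasibility cannot succeed; optimality is where the entire content of the statement lies (which is why the paper's proof emphasizes that $z^u$ is an \emph{optimal} solution). Your main argument uses optimality correctly, so you should discard the fallback rather than prefer it.
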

\begin{proof}
  Let~$I \in \I'$ be such that~$z^u_{I} \notin \Z$.
  Then, $z^u_I \in (0,1)$, since~$z^u$ is an \emph{optimal} solution of the LP
  relaxation.
  Due to~\eqref{eq:CGmodelCovering}, for every~$y \in I$, there exists~$J^y
  \in \I' \setminus \{I\}$ with~$y \in J^y$ such that~$z^u_{J^y} > 0$.
  For at least one~$J^y$ we have~$z^u_{J^y} \in (0,1)$, because otherwise,
  we could improve the objective value of~$z^u$ by setting~$z^u_I$ to~0 and
  still satisfying all constraints.
  Such a set~$J^y$ together with~$I$ satisfy the properties in the
  statement of the lemma:
  Since~$y$ is contained in both~$I$ and~$J^y$, we have~$I \cap J^y \neq
  \emptyset$.
  Moreover, as~$I \neq J^y$, $I \Delta J^y \neq \emptyset$.
\end{proof}
In our implementation, we compute for each variable~$z^u_I$ its
fractionality $\theta(I) = \frac{1}{2} - \min\{z^u_I, 1 - z^u_I\}$.
Then, we select~$I$ and~$J$ such that~$\theta(I) + \theta(J)$ is maximized;
the branching candidates~$y_1 \in I \cap J$ and~$y_2 \in I \Delta J$ are
selected arbitrarily.

\paragraph{\textbf{Solving LP Relaxations in the Tree}}
To not re-generate variables that have been fixed to~0 by the branching
rule, we need to incorporate the branching decisions active at a node of
the branch-and-bound tree into the pricing problem.
This can easily be done by adding linear constraints to the root node
formulation of the pricing problem.
If a branching decision was that~$y_1$ and~$y_2$ shall be contained in
different sets, we add~$s_{y_1 1} + s_{y_2 1} \leq 1$ to the pricing problem.
The branching decision that~$y_1$ and~$y_2$ have to be contained in the
same set can be enforced by the constraint~$s_{y_1 1} = s_{y_2 1}$.

\section{Numerical Experiments}
\label{sec:experiments}

The aim of this section is to compare the practical performance of the
three models for computing~$\rc_\varepsilon(X,Y)$ as well as their
enhancements.
To this end, we have implemented all three models in C/C++ using
\texttt{SCIP~7.0.3} as modeling and branch-and-bound framework and
\texttt{SoPlex~5.0.2} to solve all LP relaxations.
All branching, propagation, separation, and pricing methods are implemented
using the corresponding plug-in types of \texttt{SCIP}.
Since we are not aware of an alternative separation routine for hiding set
cuts, we compute all hiding sets of size two in a straightforward fashion
before starting the branch-and-bound process.
During the solving process, we separate these inequalities if the
corresponding cuts are violated.
To handle symmetries via lexicographic orderings, we use \texttt{SCIP}'s internal
plug-ins \texttt{cons\_orbitope}, \texttt{cons\_orbisack}, and
\texttt{cons\_symresack} that implement the methods discussed in
Section~\ref{sec:modelEnhancements}; the branching and pricing plug-ins for
the column generation model strongly build up on the corresponding plug-ins
of the binpacking example provided in the SCIP Optimization Suite.
All convex hull computations have been carried out using \texttt{cdd~0.94m}~\cite{fukuda1997cdd}
and graph symmetries are detected using \texttt{bliss~0.73}~\cite{bliss}.

Our implementation is available online at
github\footnote{\url{https://github.com/christopherhojny/relaxation_complexity} (githash 4ffb6c0e was
  used for our experiments)}.

\paragraph{\textbf{Implementation Details}}
All models admit some degrees of freedom that we detail in the following.
Both the compact model and the cut model require an upper bound on the relaxation complexity.
In both models, we impose the trivial upper bound which is given by the
number of facets of~$\conv(X)$.
We also use the facet description to derive an initial solution for both
models.
In the column generation model, we need to select a subset of~$\I$ to
define initial variables.
We use the sets~$I \in \I$ that are defined by the facet defining
inequalities of~$\conv(X)$, i.e., the sets of points in~$Y$ that are
separated from~$X$ by the facet defining inequalities.
Moreover, we include the singleton sets~$\{y\}$, for~$y \in Y$, to make sure
that the LP relaxation remains feasible after branching.

\paragraph{\textbf{Settings}}
To encode the different settings that we have tested, we make use of the
following abbreviations:
\begin{itemize}[left= 24pt]
\item[hiding] Whether hiding set cuts are added (1) or not (0).
\item[sym.] Which symmetry method is used: none (0), simple (s), or
  advanced (a), where simple is~\eqref{eq:sortU} and~\eqref{eq:sortA}, and
  advanced uses~\eqref{eq:sortU} and additionally enforcing lexicographically maximal
  solutions based on symmetries of~$X$ and~$Y$.
\item[prop.] Whether the convexity propagator is used (1) or not (0).
\end{itemize}
Note that we do not report on results for the intersection propagation
algorithm.
This is because, in preliminary experiments, we have seen that its running time is very
high, in particular, because it needs to compute in each
iteration~$\cO(\card{Y})$ convex hulls.
As a result, we could hardly solve any instance, not even small ones.

\paragraph{\textbf{Test Sets}}
In our experiments, we have used three different test sets:
\begin{itemize}[left= 35pt]
\item[\basic]
  The sets~$X$ are the vertices of the 0/1 cube, the crosspolytope, or
  the standard simplex in dimensions~$d \in \{3,4,5\}$.
  For~$X \subseteq \Z^d$, the sets~$Y$ consist of all points in~$\Z^d
  \setminus X$ whose~$\ell_1$-distance to~$X$ is at most~$k$, where~\mbox{$1 \leq k \leq 10 - d$}.
  The reason for smaller distance in higher dimension is that the problems
  get considerably more difficult to solve with increasing~$k$.

\item[\dclosed] This test set consists of 99 full-dimensional subsets~$X$ of $\{0,1\}^5$ that correspond to down-closed subsets (or abstract simplicial complexes) of the Boolean lattice on~$5$ elements.
  The corresponding sets~$Y$ are the points in~$\Z^5 \setminus X$
  whose~$\ell_1$-distance to~$X$ is at most~$k \in \{1,2,3\}$.
  The sets~$X$ have been generated by the natural one-to-one correspondence between inclusion-maximal sets in a down-closed family and antichains in the Boolean lattice.
  
\item[\sboxes] The test set comprises~18 instances modeling 4-bit (12
  instances) and 5-bit (6 instances)
  S-boxes, which are certain non-sparse Boolean functions arising in
  symmetric-key cryptography.
  The derived sets~$X$ are contained in~$\bin{8}$ and~$\bin{10}$,
  respectively, and~$Y$ are the complementary binary points.
  These instances have also been used by Udovenko~\cite{Udovenko2021} who solved the full
  model~\eqref{eq:CGmodel}, i.e., without column generation.
\end{itemize}
The \basic instances feature various aspects that might be relevant for
computing~$\rc(X)$ via computing a series of values~$\rc_\varepsilon(X,Y)$
for different~$Y$ and~$\varepsilon$ according to~\cite{averkovschymura2020complexity}:
The cube is parity complete, thus there exists a small set~$Y$ such
that~$\rc(X) = \rc_\varepsilon(X,Y)$ (in fact, this set is~$\{-1,0,1,2\}^d
\setminus X$); the crosspolytope has an interior integer
point and thus there exists a (potentially large) finite set~$Y$
with~$\rc(X) = \rc_\varepsilon(X,Y)$; for the simplex~$\Delta_4$ in~$\R^4$, no finite set~$Y$
exists with $\rc(\Delta_4) = \rc(\Delta_4,Y)$; see~\cite{AverkovEtAl2021}.
That is, $\rc(\Delta_4, Y) < \rc(\Delta_4) \leq \rc_\Q(X,Y)$ for all finite sets~$Y \subseteq \Z^4$.

Since the standard simplex $\Delta_d$ is a down-closed subset of $\{0,1\}^d$, the small-sized \dclosed instances might be good candidates
for further examples~$X$ such that~$\rc_\varepsilon(X,Y) < \rc_\Q(X)$, for every finite set $Y \subseteq \Z^d$ and for $\varepsilon>0$ small enough.
Our aim for selecting these instances is thus to identify whether there are
potentially further candidates for sets~$X$ whose relaxation complexity
cannot be computed via finite sets~$Y$.

Finally, the \sboxes instances are used to investigate whether our techniques are
suited to compute~$\rc_\varepsilon(X,Y)$ also in higher dimensions.
This is relevant, among others, in the field of social choice or
symmetric cryptanalysis, where the aim is to find~$\rc(X, \bin{d})$ for
sets~$X \subseteq \bin{d}$.

\paragraph{\textbf{Computational Setup}}

All experiments have been run on a Linux cluster with Intel Xeon E5
\SI{3.5}{\GHz} quad core processors and~\SI{32}{\giga\byte} memory.
The code was executed using a single thread and the time limit for all
computations was~\SI{4}{\hour} per instance.

All mean numbers are reported in shifted geometric mean~$\prod_{i = 1}^n
(t_i + s)^{\frac{1}{n}} - s$ to reduce the impact of outliers.
For mean running times, a shift of~$s = 10$ is used; for nodes of the
branch-and-bound tree, we use~$s = 100$.
The value of~$\varepsilon$ in computing~$\rc_\varepsilon(\cdot, \cdot)$ is
set to~$0.001$.
The upper bound on the number of inequalities needed in the compact and
cutting plane model is given by the number of facets of~$\conv(X)$.
We also provide an initial primal solution corresponding to a facet
description of~$\conv(X)$.

\subsection{Results for Test Set \basic}

Due to our choice of the sets~$X$ and~$Y$, the basic test set comprises~18 cube,
crosspolytope, and simplex instances, respectively.
Table~\ref{tab:compact} shows the results for the compact model.
For the plain compact model, we observe that \texttt{SCIP} can already
solve quite some instances, but, in comparison to the enhanced variants,
the running times are rather high.
Checking each of the enhancements separately, handling symmetries is most
important to reduce running time and to increase the number of instances
solvable within the time limit.
Interestingly, handling symmetries on the~$a$-variables modeling the
inequalities in a relaxation performs better than handling the symmetries
of the points to separate.
Adding hiding set cuts to the problem formulation is also beneficial,
whereas the convexity propagator seems to harm the solving process in
particular for cube instances.
The worse performance for enabled propagation cannot be explained on the
running time of the propagator:
For cube instances, e.g., the maximum running time per instance of the
propagator was~\SI{27}{\s}, which is much smaller than the increase of mean
running time.
Thus, it seems that the found reductions guide the branch-and-bound search
into the wrong direction or make it more difficult for \texttt{SCIP} to
find other reductions.

\begin{table}
  \begin{scriptsize}
    \caption{Run times for different settings for \basic instances using the compact model.}
    \label{tab:compact}
    \begin{tabular*}{\textwidth}{@{}c@{\;\;\extracolsep{\fill}}ccrrrrrr@{}}\toprule
      \multicolumn{3}{c}{setting} & \multicolumn{2}{c}{cube} & \multicolumn{2}{c}{cross} & \multicolumn{2}{c}{simplex}\\
      \cmidrule{1-3} \cmidrule{4-5} \cmidrule{6-7} \cmidrule{8-9}
      hiding & sym. & prop. & time & \#solved & time & \#solved & time & \#solved\\
      \midrule
0 & 0 & 0 & \num{598.7} & \num{  13} & \num{1317.2} & \num{   8} & \num{232.8} & \num{  14}\\
0 & 0 & 1 & \num{795.3} & \num{  13} & \num{1395.4} & \num{   8} & \num{237.3} & \num{  14}\\
0 & a & 0 & \num{347.3} & \num{  14} & \num{476.7} & \num{  12} & \num{165.5} & \num{  14}\\
0 & s & 0 & \num{217.7} & \num{  15} & \num{283.3} & \num{  13} & \num{118.3} & \num{  15}\\
1 & 0 & 0 & \num{303.6} & \num{  13} & \num{682.0} & \num{  10} & \num{ 69.9} & \num{  15}\\
1 & a & 0 & \num{ 95.2} & \num{  15} & \num{206.1} & \num{  14} & \num{ 49.9} & \num{  16}\\
1 & a & 1 & \num{ 84.8} & \num{  15} & \num{221.5} & \num{  14} & \num{ 67.0} & \num{  16}\\
1 & s & 0 & \num{ 75.9} & \num{  18} & \num{151.0} & \num{  15} & \num{ 61.1} & \num{  16}\\
1 & s & 1 & \num{ 76.9} & \num{  18} & \num{158.4} & \num{  16} & \num{ 64.6} & \num{  16}\\
      \bottomrule
    \end{tabular*}
  \end{scriptsize}
\end{table}

Combining simple symmetry handling and hiding set cuts leads consistently
to the best results, reducing mean running time for cube instances
by~\SI{87}{\percent}, for crosspolytope instances by~\SI{89}{\percent}, and
simplex instances by~\SI{74}{\percent}.
In particular, the combined setting can solve all cube instances and almost
all crosspolytope and simplex instances within the time limit.

Next, we discuss the column generation model for which we only compare two
variants: we either disable or enable hiding set cuts in the pricing
problem.
Since the convexity propagator does not seem to be helpful for the compact
model, we do not enable it when solving the pricing problem.
Moreover, symmetry handling is not important, because there is only one
inequality to be identified by the pricing model.

\begin{table}
  \begin{scriptsize}
    \caption{Run times for different settings for \basic instances using the column generation model.}
    \label{tab:cg}
    \begin{tabular*}{\textwidth}{@{}c@{\;\;\extracolsep{\fill}}ccrrrrrr@{}}\toprule
      \multicolumn{3}{c}{setting} & \multicolumn{2}{c}{cube} & \multicolumn{2}{c}{cross} & \multicolumn{2}{c}{simplex}\\
      \cmidrule{1-3} \cmidrule{4-5} \cmidrule{6-7} \cmidrule{8-9}
      hiding & sym. & prop. & time & \#solved & time & \#solved & time & \#solved\\
      \midrule
0 & 0 & 0 & \num{109.0} & \num{  15} & \num{350.4} & \num{  13} & \num{455.2} & \num{  13}\\
1 & 0 & 0 & \num{ 73.5} & \num{  14} & \num{282.1} & \num{  12} & \num{312.7} & \num{  14}\\
      \bottomrule
    \end{tabular*}
  \end{scriptsize}
\end{table}

Comparing the column generation model with disabled hiding set cuts, we can
see that it performs for cube and crosspolytope instances much better than
the plain compact model:
the running time for cubes reduces by~\SI{82}{\percent} and for cross
polytopes by~\SI{74}{\percent}.
For cubes, all solvable instances are solved within the root node which is,
on the one hand, because of the strong dual bound as described in
Proposition~\ref{prop:qualityCG}.
On the other hand, the generated sets~$I \in \I$ allow heuristics to find
high quality solutions yielding a matching primal bound.
For crosspolytopes, all instances of 3-dimensional sets~$X$ can be solved
within the root node; for 4- and 5-dimensional sets, however, \texttt{SCIP}
needs to start branching to find an optimal solution.
Looking onto results on a per-instance basis reveals that the pricing
problems become considerably harder if~$d$ and~$k$ increases.
For example, \texttt{SCIP} is only able to process~2 nodes of the
branch-and-bound tree for~$d=k=5$.
For the simplex instances, the column generation model needs approximately
twice as much time as the plain compact model, which is again explained by
the very high running time of the pricing problem.

Enabling also hiding set cuts helps to solve the pricing problems more
efficiently.
In comparison with the enhanced compact model, however, the enhanced column
generation model is only competitive on the cube instances.
On the cross\-polytope and simplex instances, it is much slower.

\begin{table}
  \begin{scriptsize}
    \caption{Run times for different settings for \basic instances using the cut model.}
    \label{tab:cut}
    \begin{tabular*}{\textwidth}{@{}c@{\;\;\extracolsep{\fill}}ccrrrrrr@{}}\toprule
      \multicolumn{3}{c}{setting} & \multicolumn{2}{c}{cube} & \multicolumn{2}{c}{cross} & \multicolumn{2}{c}{simplex}\\
      \cmidrule{1-3} \cmidrule{4-5} \cmidrule{6-7} \cmidrule{8-9}
      hiding & sym. & prop. & time & \#solved & time & \#solved & time & \#solved\\
      \midrule
0 & 0 & 0 & \num{9229.1} & \num{   7} & \num{9411.0} & \num{   3} & \num{3099.1} & \num{   7}\\
0 & 0 & 1 & \num{9122.2} & \num{   7} & \num{9361.5} & \num{   3} & \num{3072.6} & \num{   7}\\
0 & a & 0 & \num{2549.9} & \num{   8} & \num{3734.6} & \num{   9} & \num{1726.5} & \num{  10}\\
0 & s & 0 & \num{4206.1} & \num{   7} & \num{5488.5} & \num{   6} & \num{2129.0} & \num{   8}\\
1 & 0 & 0 & \num{1733.0} & \num{   8} & \num{994.6} & \num{   7} & \num{428.5} & \num{  12}\\
1 & a & 0 & \num{528.0} & \num{  10} & \num{382.9} & \num{  10} & \num{257.8} & \num{  11}\\
1 & a & 1 & \num{424.8} & \num{  13} & \num{350.7} & \num{  10} & \num{259.8} & \num{  11}\\
1 & s & 0 & \num{433.6} & \num{  10} & \num{435.9} & \num{  11} & \num{296.3} & \num{  11}\\
1 & s & 1 & \num{435.9} & \num{  10} & \num{378.2} & \num{  12} & \num{277.0} & \num{  12}\\
      \bottomrule
    \end{tabular*}
  \end{scriptsize}
\end{table}

Finally, we consider the cutting plane model.
In the plain version, this model can hardly solve any instance efficiently.
Comparing the different enhancements with each other, we can see, analogously
to the compact model, that adding hiding set cuts and handling symmetries
is beneficial.
Interestingly, the convexity propagator helps to improve the running time
if both the previous enhancements are enabled
by~\SI{90}{\percent}--\SI{95}{\percent}, leading to the best setting for
this model.
But even this winner setting cannot compete with the enhanced compact
model.

From the results using the compact and cutting plane model, we draw the
following conclusion regarding the convexity propagator.
In principle, this method models the important aspect that the points being
cut by an inequality form a lattice-convex set.
The cutting plane method can thus benefit from the propagator as this
property is not encoded in the model.
The compact model, however, makes use of additional variables modeling the
inequalities of a relaxation.
Since the convexity propagator does not improve \texttt{SCIP}'s performance, we
conclude that these additional variables already sufficiently encode the
lattice-convexity of cut points.

In summary, the column generation model provides very good primal and dual
bounds.
If these bounds match, $\rc_\varepsilon(X,Y)$ can be computed rather
efficiently if not too many pricing problems need to be solved.
However, if the bounds do not match, the NP-hardness of the pricing
problem strikes back and solving many further pricing problems is too
expensive.
In this case, the compact model is a rather effective alternative that also
allows to compute~$\rc_\varepsilon(X,Y)$ for~$d=5$ in many cases.

\subsection{Results for Test Set \textsf{downcld}}

In this section, we turn the focus on~5-dimensional 0/1 down-closed sets.
On the one hand, our aim is to investigate whether the findings of the
previous section carry over to a much broader test set in dimension~5.
On the other hand, we are interested in identifying further sets~$X
\subseteq \bin{5}$ with~$\rc_\varepsilon(X,Y) < \rc_\Q(X)$ for every choice of a finite set~$Y \subseteq \Z^d$ and~$\varepsilon > 0$ small enough.
Because of our results on the \basic test set, we did not run any
experiments using the cutting plane model as we expect that it can
hardly solve any instance.
Instead, we consider a hybrid version of the compact model and the column
generation model:
We only solve the column generation model's LP relaxation to derive a
strong lower bound on~$\rc_\varepsilon(X,Y)$ and to find good primal
solutions.
Both are transferred to the compact model with the hope to drastically
reduce solving time.
The running times and number of nodes reported for the hybrid model
are means of the total running time and total number of nodes for solving
the LP relaxation in the column generation model and the resulting compact
model.

Table~\ref{tab:knapsack} shows aggregated results for the~99 instances of
the \dclosed test set for different~$\ell_1$-neighborhoods~$Y$ of~$X$
(radius~1--3).
While the plain compact model is able to solve two third of all instances
for radius~1, computing~$\rc_\varepsilon(X, Y)$ for larger radii becomes
much harder.
As the plain model can hardly solve any instance for radius at least~2,
there is definitively a need for model enhancements.
In general, the same observations as in the previous section can be made:
symmetry handling and adding hiding set cuts improve the solution process a
lot.
The biggest impact is achieved by symmetry handling; the convexity propagator
is not helpful in the best setting.
However, sometimes it can improve the running time, e.g., if the ``wrong''
symmetry handling method is used.

\afterpage{%
  \clearpage%
  \begin{landscape}
    \begin{table}
      \vspace{2cm}
      \begin{scriptsize}
        \caption{Comparison of running times for different settings for \dclosed instances.}
        \label{tab:knapsack}
        \begin{tabular*}{\linewidth}{@{}c@{\;\;\extracolsep{\fill}}ccrrrrrrrrr@{}}\toprule
          \multicolumn{3}{c}{setting} & \multicolumn{3}{c}{radius 1} & \multicolumn{3}{c}{radius 2} & \multicolumn{3}{c}{radius 3}\\
          \cmidrule{1-3} \cmidrule{4-6} \cmidrule{7-9} \cmidrule{10-12}
          hiding & sym. & prop. & \#solved & \#nodes & time & \#solved & \#nodes & time & \#solved & \#nodes & time\\
          \midrule
          \multicolumn{12}{l}{compact model:}\\
          0 & 0 & 0 & 69 & \num{351640.3} & \num{3168.5} & \num{5} & \num{149820.2} & \num{13173.5} & \num{0} & \num{29710.1} & \num{14400.0}\\
          0 & 0 & 1 & 69 & \num{342597.2} & \num{3155.1} & \num{3} & \num{145207.9} & \num{13237.5} & \num{0} & \num{29539.2} & \num{14400.0}\\
          0 & a & 0 & 97 & \num{59896.3} & \num{ 569.7} & \num{18} & \num{201253.6} & \num{12041.9} & \num{2} & \num{37865.0} & \num{14213.4}\\
          0 & s & 0 & 99 & \num{27421.4} & \num{ 187.8} & \num{58} & \num{278139.2} & \num{6299.6} & \num{3} & \num{83758.4} & \num{14033.7}\\
          1 & 0 & 0 & 82 & \num{24247.7} & \num{ 399.3} & \num{34} & \num{103717.2} & \num{7077.9} & \num{2} & \num{13704.6} & \num{14056.9}\\
          1 & a & 0 & 99 & \num{ 424.0} & \num{  22.6} & \num{75} & \num{16209.4} & \num{1870.2} & \num{14} & \num{14882.7} & \num{11990.5}\\
          1 & a & 1 & 99 & \num{ 424.4} & \num{  22.6} & \num{78} & \num{14936.4} & \num{1680.6} & \num{14} & \num{15975.0} & \num{11858.9}\\
          1 & s & 0 & 99 & \num{ 793.3} & \num{  30.5} & \num{99} & \num{8558.6} & \num{ 774.2} & \num{69} & \num{21696.0} & \num{6563.0}\\
          1 & s & 1 & 99 & \num{ 792.6} & \num{  30.5} & \num{99} & \num{8789.0} & \num{ 783.0} & \num{68} & \num{24044.2} & \num{7172.8}\\
          \midrule
          \multicolumn{12}{l}{column generation model:}\\
          0 &  &  & 78 & \num{  87.7} & \num{1223.6} & \num{4} & \num{  19.8} & \num{13424.8} & \num{1} & \num{   8.4} & \num{14089.9}\\
          1 &  &  & 85 & \num{ 109.3} & \num{ 644.5} & \num{6} & \num{  48.9} & \num{11500.4} & \num{1} & \num{  14.5} & \num{13949.5}\\
          \midrule
          \multicolumn{12}{l}{hybrid model:}\\
          0 & s & 0 & 99 & \num{8081.0} & \num{  89.0} & \num{85} & \num{375957.5} & \num{2865.8} & \num{15} & \num{341632.4} & \num{11644.5}\\
          0 & s & 1 & 99 & \num{8377.1} & \num{  99.2} & \num{84} & \num{375309.0} & \num{3406.0} & \num{11} & \num{377211.9} & \num{11959.0}\\
          1 & s & 0 & 99 & \num{ 347.1} & \num{  22.8} & \num{99} & \num{4219.6} & \num{ 339.3} & \num{80} & \num{18162.5} & \num{3203.7}\\
          1 & s & 1 & 99 & \num{ 346.2} & \num{  22.8} & \num{99} & \num{4437.3} & \num{ 347.0} & \num{79} & \num{18278.0} & \num{3123.0}\\
          \bottomrule
        \end{tabular*}
      \end{scriptsize}
    \end{table}
  \end{landscape}
}

For radius~2 and~3, we find that the simple symmetry handling methods
perform much better than the advanced methods.
Using hiding set cuts and simple symmetry handling is~\SI{59}{\percent}
faster than the corresponding setting with advanced symmetry handling if
the radius is~2; for radius~3, it is~\SI{45}{\percent} faster.
Moreover, simple symmetry handling can solve all~99 instances for radius~2
(resp.\ 69 instances for radius~3), whereas the advanced
setting can only solve~75 (resp.~14) instances.
Interestingly, for radius~1, the advanced setting is~\SI{26}{\percent}
faster than the simple setting.
A possible explanation is based on the nature of the advanced
setting:
Each inequality defining a relaxation of~$X$ w.r.t.~$Y$ defines a pattern
on the points from~$Y$ that are cut by this inequality.
The advanced method enforces that the cut patterns of the inequalities are
sorted lexicographically based on a sorting of the elements of~$Y$.
Since the results of the lexicographic comparison is determined by the first position in
which two vectors differ, it is unlikely that points having a late
position in the ordering of~$Y$ are very relevant for the lexicographic
constraint.
Thus, the symmetries are in a certain sense mostly handled for the early
points in this ordering.
In contrast to this, the simple method takes the geometry of the
inequalities in a relaxation into account by sorting inequalities based on
their first coefficients.
Together with other components of the solver, this seems to have more
implications on the cut points from~$Y$ if the radius becomes larger.

In comparison to the enhanced compact model, the column generation model is
again inferior.
For radius at least 2, it can hardly solve any instance and, as already
discussed in the previous section, the reason for this is the long running
time of the pricing models that need to be solved often at each node of the
tree.
This is reflected by the number of processed nodes during the
branch-and-price procedure that drops drastically (as the number of solved
instances) if the radius is getting larger.
However, we can again observe that the root node can be solved relatively
efficiently and that the obtained primal and dual bounds are rather strong.
This is reflected in the hybrid model, which solves most instances and
reduces the running time (in comparison to the best compact model)
by~52--\SI{56}{\percent} for radius~2 and~3.
For radius~1, the running times are comparable.

Regarding the usefulness of hiding set cuts in the hybrid model, we observe
that they are essential for solving the \dclosed instances efficiently.
They allow to solve all instances for radius~1 and 2 and improve on the
hybrid setting without cuts by~\SI{74}{\percent} and~\SI{88}{\percent},
respectively.
This effect is even more dominant for radius~3, where it significantly
increases the number of solvable instances, reducing the running time
by~\SI{72}{\percent}.
It is also noteworthy that the hybrid setting with hiding set cuts is the
only setting allowing to solve~80 instances, which improves the running time of
the compact model by~\SI{51}{\percent}.
In summary, based on our experiments, the hybrid model is the best choice
for computing~$\rc_\varepsilon(X,Y)$ as it combines the strong bounds from
the column generation model with the ability of the compact model to
quickly solve LP relaxations within the branch-and-bound tree.
In particular, it benefits from hiding set cuts since their implications
are very difficult to be found by \texttt{SCIP}.

Finally, concerning our goal to identify candidates for sets~$X \subseteq
\bin{5}$ such that~$\rc_\varepsilon(X,Y) < \rc_\Q(X)$ for all finite~$Y
\subseteq \Z^5$ and $\varepsilon>0$ small enough, our experiments for radius~3 revealed the
following:
If~$Y(X)$ are the integer points in the~$\ell_1$-neighborhood of~$X$ with
radius~3, then there are three sets~$X$ such that~$\rc_\varepsilon(X,Y(X))
= 4$.
These sets are~$\Delta_5$, $\Delta_5 \cup \{e_1 + e_2\}$ and~$\Delta_4
\times \{0,1\}$.
Moreover, there are~16 sets~$X$ with~$\rc_\varepsilon(X, Y(X)) = 5$.
It is left open for future research to identify which other sets than~$\Delta_5$ satisfy~$\rc_\varepsilon(X, Y(X)) < \rc_\Q(X)$.
Note that $\rc_\Q(X) \geq 6$, whenever $X \subseteq \bin{5}$ is full-dimensional, because rational relaxations must be bounded.

\subsection{Results for Test Set \sboxes}

The results for the \sboxes test set are summarized in Table~\ref{tab:sbox}.
Note that we do not report on results for the 10-dimensional instances in
the compact model with enabled hiding set cuts, because all these
experiments hit a memory limit of \SI{20}{\giga\byte}.
The reason is that these models grow very large even without any
enhancements as we use the number of facets of~$\conv(X)$ to upper
bound~$\rc_\varepsilon(X,Y)$; the number of facets for these instances
ranges between~888 and~2395.
For the largest instances, even the basic compact model hits the memory
limit.
Adding hiding set cuts for the remaining instances causes that all
instances hit the memory limit.
But also for the smaller instances, \texttt{SCIP} is hardly able to solve
any of these instances even if model enhancements are enabled due to huge
number of variables and constraints.

\begin{table}
  \vspace{2cm}
  \begin{scriptsize}
    \caption{Comparison of running times for different settings for \sboxes instances.}
    \label{tab:sbox}
    \begin{tabular*}{\linewidth}{@{}c@{\;\;\extracolsep{\fill}}ccrrrrrr@{}}\toprule
      \multicolumn{3}{c}{setting} & \multicolumn{3}{c}{dimension 8} & \multicolumn{3}{c}{dimension 10}\\
      \cmidrule{1-3} \cmidrule{4-6} \cmidrule{7-9}
      hiding & sym. & prop. & \#solved & \#nodes & time & \#solved & \#nodes & time\\
      \midrule
      \multicolumn{9}{l}{compact model:}\\
0 & 0 & 0 & 0 & \num{31942.1} & \num{14400.0} & \num{0} & \num{   4.1} & \num{14400.0}\\
0 & 0 & 1 & 0 & \num{31947.4} & \num{14400.0} & \num{0} & \num{   4.1} & \num{14400.0}\\
0 & a & 0 & 0 & \num{5497.3} & \num{14400.0} & \num{3} & \num{   6.6} & \num{12046.1}\\
0 & s & 0 & 0 & \num{3598.9} & \num{14400.0} & \num{2} & \num{   6.0} & \num{12492.2}\\
1 & 0 & 0 & 0 & \num{2970.5} & \num{14400.0} &  ---    & ---          & ---          \\
1 & a & 0 & 0 & \num{ 177.5} & \num{14400.0} &  ---    & ---          & ---          \\
1 & s & 0 & 0 & \num{ 163.5} & \num{14400.0} &  ---    & ---          & ---          \\
      \midrule
      \multicolumn{9}{l}{column generation model:}\\
0 &  &  & 12 & \num{  41.7} & \num{ 185.7} & \num{2} & \num{   7.5} & \num{6254.4}\\
1 &  &  & 12 & \num{  61.8} & \num{ 358.2} & \num{1} & \num{   3.8} & \num{14238.1}\\
      \midrule
      \multicolumn{9}{l}{hybrid model:}\\
0 & s & 0 & 7 & \num{6192.5} & \num{ 400.8} & \num{1} & \num{7471.5} & \num{7153.2}\\
0 & s & 1 & 7 & \num{6180.4} & \num{ 398.9} & \num{1} & \num{7513.8} & \num{7153.0}\\
1 & s & 0 & 9 & \num{14551.0} & \num{1251.3} & \num{1} & \num{  56.0} & \num{14054.3}\\
1 & s & 1 & 9 & \num{14551.0} & \num{1243.6} & \num{1} & \num{  56.0} & \num{14053.4}\\
      \bottomrule
    \end{tabular*}
  \end{scriptsize}
\end{table}

In contrast to this, we see that the column generation model performs
extremely well for the problems in dimension~8.
It can solve all twelve 8-dimen\-sional instances within the time limit, on
average in~\SI{185.7}{\second} if hiding set cuts are disabled and in
roughly twice this amount of time with enabled hiding set cuts.
An explanation for the worse behavior with enabled cuts is that the number
of hiding set cuts increases drastically in comparison with lower
dimensional problems.
Thus, creating and separating these cuts is a non-trivial task.
For dimension~10, the column generation model is also able to solve~2 out
of~6 instances within the time limit.

Finally, the hybrid model performs worse than the column generation model.
Although the derived bounds from solving the column generation model's LP
relaxation yield again very good bounds on the relaxation complexity, the
value of~$\rc(X, \bin{d})$ can still be large if~$d \in \{8,
10\}$.
Thus, also the compact model embedded in the hybrid model is struggling
with the number of variables and constraints.
For this reason, computing~$\rc(X, \bin{d})$ via the column
generation model is most competitive.

\subsection{Conclusions}

Being able to compute the exact value of the
quantity~$\rc_\varepsilon(X,Y)$ is highly relevant in many areas, such as,
social choice, symmetric cryptanalysis, or machine learning.
For this reason, we have proposed three different models that allow to
compute~$\rc_\varepsilon(X,Y)$ using mixed-integer programming techniques.
As our experiments reveal, each of these models comes with advantages and
disadvantages.
The compact model, for example, works well in small dimensions as the
number of variables and inequalities is small and it encapsulates all
essential information about~$\rc_\varepsilon(X,Y)$.
In higher dimensions, however, the dual bounds of the compact model become
weaker.
In this case, the column generation model provides very good bounds that
can be transferred to the compact model to still
compute~$\rc_\varepsilon(X,Y)$ rather efficiently if~$d=5$.
But if the dimension~$d$ grows even larger, only the column generation
model seems to be competitive as it does not scale as badly as the compact
model when~$\rc_\varepsilon(X,Y)$ increases.
The main reason is that the compact model is relying on a good upper bound
on~$\rc_\varepsilon(X,Y)$ to be indeed compact.

These findings thus open the following directions for future research.
Since the compact model requires a good upper bound
on~$\rc_\varepsilon(X,Y)$, it is natural to investigate heuristic
approaches for finding $\varepsilon$-relaxations of~$X$ or to develop
approximation algorithms.
Moreover, since the column generation model becomes more relevant if~$d$ is
large, it is essential that the pricing problem can be solved efficiently.
Since the pricing problem is NP-hard, also here a possible future direction
could be to develop heuristics or approximation algorithms for solving it.
For both the compact and column generation model, hiding set cuts turned
out to be useful.
However, we are not aware of an efficient routine for generating these
cutting planes.
Thus, it is natural to devise an efficient scheme for generating hiding set
cuts on the fly.
Finally, as additional inequalities such as hiding set cuts and symmetry
handling inequalities drastically improved the performance of the compact
model, the development of further inequalities modeling structural
properties of relaxation complexity might allow to solve the compact model
even more efficiently.
\medskip

\textbf{Acknowledgements}
We thank Aleksei Udovenko for providing the \sboxes instances used by him
in~\cite{Udovenko2021}.

\bibliographystyle{spmpsci}      

\end{document}